\documentclass[12pt,a4paper]{amsart}
\usepackage{tikz}
\usepackage{pdflscape}
\usepackage{amsmath}
\usepackage{amsfonts}
\usepackage{amssymb}
\usepackage{mathtools}
\usepackage[all]{xy}
\usepackage{color}
\usetikzlibrary{arrows}
\usepackage{float}
\usepackage[shortlabels]{enumitem}
\usepackage{ifpdf}
\ifpdf
  \usepackage[colorlinks,
            linkcolor=magenta,       
            anchorcolor=magenta,     
            citecolor=magenta,       
            final,
            hyperindex]{hyperref}
\else
  \usepackage[colorlinks,
            linkcolor=magenta,       
            anchorcolor=magenta,     
            citecolor=magenta,       
            final,
            hyperindex,driverfallback=hypertex]{hyperref}
\fi
\newtheorem{theorem}{Theorem}[section]
\newtheorem{lemma}[theorem]{Lemma}
\newtheorem{coro}[theorem]{Corollary}

\theoremstyle{definition}
\newtheorem{defn}[theorem]{Definition}
\newtheorem{remark}[theorem]{Remark}

\makeatletter \@addtoreset{equation}{section} \makeatother
\allowdisplaybreaks

\newcommand{\delete}[1]{}

\newcommand{\lambdaa}[2]{\lambda_{#1}{#2}}

\newcommand{\rhoo}[2]{\rho_{#1}{#2}}

\begin{document}
\title[Left semibraces, near left semibraces and the Yang-Baxter equation]{Left semibraces, near left semibraces and the Yang-Baxter equation}


\author{Shoufeng Wang}
\address{School of Mathematics, Yunnan Normal University, Kunming, Yunnan 650500, China}
\email{wsf1004@163.com}


\begin{abstract}  A triple $(S, +, \cdot)$ is called a {\em left semibrace} if $(S, +)$  is a semigroup, $(S, \cdot)$ is a group with identity $e$ and $x(y+z)=(xy)+(x(x^{-1}+z))$ for all $x, y, z\in S$,  where $x^{-1}$ is the inverse of $x$ in the group $(S, \cdot)$. A left semibrace $(S,+,\cdot)$ is called {\em strong} if $x+y(e+z)=x+yz$ for all $x, y, z\in S$.  Left semibraces and strong left semibraces are investigated extensively in literature. However, the structure of additive semigroups of general left semibraces still remains mysterious.
 In this note, as generalizations of near left braces, we introduce  {\em near left semibraces} as follows.
A quadruple $(S,+, \cdot, \mu)$ is called a {\em near left semibrace} if $(S, +)$  is a semigroup, $(S, \cdot)$ is a group, $\mu: S\to S$ is a map and $x(y+z)=(xy)+\mu(x)+xz$ for all $x, y, z\in S$.  We first show that the additive semigroups of both left semibrace and near left semibraces are rectangular groups  and obtain some new characterizations of  strong left semibraces. In particular, we  prove that a strong left semibrace can induce a near left semibrace, and vice versa. As a consequence,   near left semibraces can provide set-theoretical solutions for the Yang-Baxter equation.   Next, we obtain a structure theorem for all left semibraces by the generalized matched products of right zero left semibraces and right cancellative left semibraces. Finally,  we consider a new map associated to a  near left semibrace and give a sufficient and necessary condition under which such a map forms a set-theoretic  solution of the Yang-Baxter equation.
Our result improve and enrich some results obtained by  Jespers and Van Antwerpen in [Forum Math. 31 (2019) 241--263], by  Catino,   Colazzo and Stefanelli in [Mediterr. J. Math. 17 (2020) 58] and by Catino, Mazzotta and Stefanelli in [J. Algebra  573 (2021) 576--619].

\end{abstract}
\makeatletter
\@namedef{subjclassname@2020}{\textup{2020} Mathematics Subject Classification}
\makeatother
\subjclass[2020]{20M17, 16T25, 16Y99}
\keywords{The Yang-Baxter equation, Left semibrace, Strong left semibrace, Near left semibraces, Near left brace,  Rectangular group}

\maketitle



\vspace{-0.8cm}
\section{Introduction}
The quantum Yang-Baxter equation first appeared in theoretical physics in Yang \cite{Yang} and  in statistical mechanics  in Baxter \cite{Baxter} independently. Now,  the Yang-Baxter equation has many applications in the areas of mathematics and mathematical physics such as knot theory, quantum computation, quantum group theory and so on. Finding all solutions of the Yang-Baxter equation is presently impossible, so in 1992, Drinfeld \cite{Drinfeld} posed the question of finding all the so called set-theoretic  solutions of the Yang-Baxter equation. Let $S$ be a non-empty set.  If a map $r: S\times S\longrightarrow S\times S$
satisfies \begin{equation}\label{Y-B}
(r\times {\rm id}_{S})({\rm id}_{S}\times r)(r\times {\rm id}_{S})=({\rm id}_{S}\times r)(r\times
{\rm id}_{S})({\rm id}_{S}\times r)
\end{equation} in the set of maps from $S\times S\times S$ to $S\times S\times S$, where  ${\rm id}_{S}$ is the identity map on $S$, then  $r$ is called a {\em set-theoretic  solution} of the Yang-Baxter equation.

Set-theoretic solutions of the Yang-Baxter equation are investigated extensively in recent years, see for example
\cite{Cedo,Cedo-Jespers-Okninski,Etingof-Schedler-Soloviev,Gateva-Ivanova-Van den Bergh,Lu-Yan-Zhu,Rump1,Rump2,Soloviev}. In particular,   Rump introduced left braces in \cite{Rump2} as a generalization of Jacobson radical rings, and a few years later, Ced$\acute{\rm o}$, Jespers, and Okni$\acute{\rm n}$ski \cite{Cedo-Jespers-Okninski} reformulated
Rump's definition of left braces.  Since then, left braces have become the most used tool in the investigations of  non-degenerate involutive solutions of the Yang-Baxter equation. In order to study non-degenerate solutions that are not necessarily involutive, Guarnieri and Vendramin \cite{Guarnieri-Vendramin} introduced a generalization of left braces, namely, skew left braces.
Let $(S, +)$ and $(S, \cdot)$ be two groups. The triple $(S, +, \cdot)$ is called a {\em skew left brace}  if
\begin{equation}\label{skew brace}x(y+z)=xy-x+xz \mbox{ for all }x,y,z\in S.
\end{equation}
In \cite[Theorem 3.1]{Guarnieri-Vendramin}, Guarnieri and Vendramin have proved that every skew left brace $(S, +, \cdot)$ can give rise to some bijective non-degenerate solution.

To investigate left  non-degenerate solutions of the Yang-Baxter equation, Catino, Colazzo  and Stefanelli \cite{Catino-Colazzo-Stefanelli} introduced left cancellative semibraces. Recall that a semigroup $(S, \cdot)$ is called {\em left cancellative} if for all $a,b,c\in S$, $ab=ac$ implies that $b=c$.
A triple $(S, +, \cdot)$ is called a {\em left cancellative semibrace} if $(S, +)$ is a  left cancellative semigroup, $(S, \cdot)$ is a group and the following axiom holds:
\begin{equation}\label{semibraces}
x(y+z)=xy+x(x^{-1}+z).
\end{equation}
One can show that (\ref{skew brace}) and (\ref{semibraces}) coincide in a skew left brace. Since groups are  left cancellative semigroups, it follows that skew left braces are left cancellative semibraces.    More results about left cancellative semibraces and left non-degenerate solutions of the Yang-Baxter equation have been provided in \cite{Castelli,Catino-Cedo-Stefanelli,Colazzo,Colazzo-Jespers-Van Antwerpen-Verwimp}. In 2019, Jespers and Van Antwerpen \cite{Jespers-Van Antwerpen} introduced and investigated general left semibraces.
A triple $(S, +, \cdot)$ is called a {\em left  semibrace} if $(S, +)$ is a semigroup, $(S, \cdot)$ is a group and the axiom (\ref{semibraces}) holds. Among other things, Jespers and Van Antwerpen have shown that under a natural assumption,   a left semibrace $(S, +, \cdot)$ can provide a solution (see \cite[Theorem 5.1]{Jespers-Van Antwerpen} and Theorem \ref{solution} in this paper). More  results on general left semibraces can be found in \cite{Colazzo-Van Antwerpen,Stefanelli}. Recently, Doikou and Rybo{\l}owicz
\cite{DoRy22x,DoRy22x1} obtained  new multi-parametric, nondegenerate, noninvolutive  set-theoretic solutions of the Yang--Baxter
equation by using the notion of the near left braces introduced there.

In \cite{Catino-Colazzo-Stefanelli}, Catino, Colazzo  and Stefanelli have shown that the additive semigroup $(S,+)$ of a left cancellative left semibrace $(S,+,\cdot)$ is right group (i.e. a direct product of a group and a right zero semigroup). In \cite{Catino-Colazzo-Stefanelli3}, Catino, Colazzo and Stefanelli have pointed out that  only partial results on additive semigroups are known for left semibraces. More specifically, under some mild assumptions, for instance in the finite case, \cite[Theorem 2.8]{Jespers-Van Antwerpen} states that the additive semigroup $(S,+)$ of a left semibrace $(S,+,\cdot)$ is a completely simple semigroup (In fact, by \cite[Theorem 2.3]{Catino-Colazzo-Stefanelli3}, $(S,+)$ is a rectangular group, i.e., it is isomorphic to the direct product of a group and a rectangular band).
On the other hand, Catino, Mazzotta and Stefanelli \cite[Proposition 5]{Catino-Mazzotta-Stefanelli} have shown that the additive semigroup $(S,+)$ of a left semibrace $(S,+,\cdot)$ is a {\em rectangular semigroup}, i.e.
\begin{equation}\label{zhi}
a+x=b+x=a+y \Longrightarrow a+x=b+y \mbox{ for all } a,b,x,y\in S.
\end{equation}
However, the structure of additive semigroups of general left semibraces still remains mysterious.

In this paper, we  study left semibraces, near left semibraces and the Yang-Baxter equation. In Section 2, we prove that the additive semigroups of   left semibraces are always  rectangular groups. Section 3 gives some characterizations of strong left semibraces. In Section 4, we introduce the notion of  near left semibraces and prove that a strong left semibrace can induce a near left semibrace, and vice versa. As a consequence,  we have that near left semibraces can provide set-theoretical solutions for the Yang-Baxter equation.  Section 5 provide a structure theorem for general left semibraces by the generalized matched products of right zero left semibraces and right cancellative left semibraces. In the final section, we introduce a new map associated to a  near left semibrace and give a sufficient and necessary condition under which such a map forms a set-theoretic  solution of the Yang-Baxter equation.
Our result improve and enrich some results  obtained in \cite{Catino-Colazzo-Stefanelli4,Catino-Mazzotta-Stefanelli,Jespers-Van Antwerpen} for left semibraces.

\section{The additive semigroup of a left semibrace is  a rectangular group}
In this section, we shall prove that the additive semigroup of a left semibrace forms  a rectangular group. We first restate the notion of left semibraces. To avoid parentheses,  throughout this paper we will assume that the multiplication has higher precedence than the addition.  As usual, we denote  the set of idempotents in a semigroup $(S,+)$ by $E(S,+)$.
\begin{defn}[\cite{Jespers-Van Antwerpen}] A triple $(S, +, \cdot)$ is called a {\em left semibrace} if $(S, +)$  is a semigroup, $(S, \cdot)$ is a group and
\begin{equation}\label{dengshi}x(y+z)=xy+x(x^{-1}+z) \mbox{ for all } x, y, z\in S,
\end{equation} where $x^{-1}$ is the inverse of $x$ in the group $(S, \cdot)$. {\em If this is the case, throughout this note, we always denote the identity in the group $(S, \cdot)$ by $e$.}
\end{defn}

The following lemma gives some basic properties of left semibraces, where items (2) and (3) have been already proved in \cite{Jespers-Van Antwerpen}.
\begin{lemma}\label{key1}Let $(S,+,\cdot)$ be a left semibrace and $x,y,z\in S$.
\begin{itemize}

\item[(1)] $x(x^{-1}+y)=e+x(x^{-1}+y)$,~$x+x(x^{-1}+y)=x(e+y)$ and $x^{-1}+x^{-1}(x+y)=x^{-1}(e+y)$.
\item[(2)] $x+e+y=x+y$ As a consequence, $e+e+e=e+e$.
\item[(3)] $e+e=e$. That is, $e\in E(S,+)$.
\item[(4)]  $x+y=x+u+y$ for all $u\in E(S,+)$.
\item[(5)] $(x+y)^{-1}+e=(x+y)^{-1} y (y^{-1}+e)$ and $(x+y)((x+y)^{-1}+e)= y (y^{-1}+e)$.
\item[(6)] $e+(e+x)(e+y)=(e+x)(e+y)$.
\item[(7)] $xy+z=x(y+x^{-1}(x+z))$.
\end{itemize}
\end{lemma}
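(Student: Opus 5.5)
Items (1), (6) and (7) follow from substitutions into \eqref{dengshi} combined with (2) and (3). Items (2) and (3) are \cite{Jespers-Van Antwerpen}. The real work is in (4) and, to a lesser extent, (5).

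\emph{Item (1).} I take $y=x^{-1}$ in \eqref{dengshi}, which gives $x(x^{-1}+z)=e+x(x^{-1}+z)$ because $xx^{-1}=e$. Taking $y=e$ gives $x(e+z)=x+x(x^{-1}+z)$. Replacing $x$ by $x^{-1}$ in this last identity gives the third formula.

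\emph{Items (2) and (3).} I will quote these from \cite{Jespers-Van Antwerpen}. Note that $a=e$ in (1) already gives $e+z=e+e+z$, and hence $e+e+e=e+e$.

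\emph{Item (7).} Apply \eqref{dengshi} to $x\bigl(y+x^{-1}(x+z)\bigr)$. This gives
\[
x\bigl(y+x^{-1}(x+z)\bigr)=xy+x\bigl(x^{-1}+x^{-1}(x+z)\bigr).
\]
By the third formula of (1) with $x$ and $x^{-1}$ interchanged, $x^{-1}+x^{-1}(x+z)=x^{-1}(e+z)$, so the second summand equals $e+z$. Hence the right side is $xy+e+z$, which equals $xy+z$ by (2).

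\emph{Item (6).} Put $a=e+x$ in the second formula of (1). This gives
\[
(e+x)(e+y)=(e+x)+(e+x)\bigl((e+x)^{-1}+y\bigr).
\]
Adding $e$ on the left and using $e+e=e$ from (3) returns the same element.

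\emph{Item (5).} Put $g=(x+y)^{-1}$, so that $g^{-1}=x+y$. I will expand $g(x+y)=e$ via \eqref{dengshi}. I will also rewrite $g(g^{-1}+e)$ using (7) with multiplier $y$ and the element $y^{-1}+e$. The aim is to identify $g+e$ with $gy(y^{-1}+e)$, using (1) and (2) to absorb the extra copies of $e$. Multiplying the first identity on the left by $x+y$ then gives the second.

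\emph{Item (4).} This is the main obstacle. My plan is to first prove $e+u+e=e$ for every $u\in E(S,+)$. Granting this, (2) gives
\[
x+u+y=x+e+u+e+y=x+e+y=x+y.
\]
To show $e+u+e=e$, set $f=e+u+e$. This is an idempotent satisfying $e+f=f=f+e$. I will apply the rectangular property \eqref{zhi} of \cite[Proposition 5]{Catino-Mazzotta-Stefanelli} to a suitable quadruple built from $e$, $u$ and $f$, together with (2), to force $f=e$. If that fails, I would instead multiply by $u^{-1}$ and use \eqref{dengshi} to transport the idempotent $u$ to one of the form $e+\cdots$, to which (1) and (2) apply directly.
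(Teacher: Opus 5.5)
There is a genuine gap in item (5), and item (4) is left incomplete; the rest is fine. Your items (1), (6) and (7) are the paper's computations. Citing (2) and (3) is acceptable, since the paper itself attributes them to Jespers--Van Antwerpen; (2) is in any case immediate from (\ref{dengshi}): $x+y=e(x+y)=x+e(e+y)=x+e+y$.

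\textbf{Item (5).} What you write is a plan, and it does not say how $x$ is eliminated. Expanding $g(x+y)=e$ by (\ref{dengshi}) gives $gx+g(g^{-1}+y)=e$. This introduces a term $gx$ that you have no way to remove. Using (7) with multiplier $y$ and element $y^{-1}+e$ only controls sums $y(y^{-1}+e)+z$, with $y(y^{-1}+e)$ on the \emph{left}. What is actually needed is that $y(y^{-1}+e)$ is absorbed on the \emph{right} of $y$. The missing observation is
\[
y+y(y^{-1}+e)=y(e+e)=y \quad\text{and}\quad e+y(y^{-1}+e)=y(y^{-1}+e),
\]
which come from the second and first formulas of (1) together with (3). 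With these, apply the third formula of (1) with $x+y$ in place of $x$ and $y(y^{-1}+e)$ in place of $y$:
\[
(x+y)^{-1}+(x+y)^{-1}\bigl(x+y+y(y^{-1}+e)\bigr)=(x+y)^{-1}\bigl(e+y(y^{-1}+e)\bigr).
\]
The left side collapses to $(x+y)^{-1}+e$ and the right side to $(x+y)^{-1}y(y^{-1}+e)$, which is (5). This is the paper's proof.

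\textbf{Item (4).} Your main route works, but you left the decisive step unstated. Set $f=e+u+e$. By (2) and (3), $f+f=e+f=f+e=f$. Taking $(a,b,x,y)=(f,e,f,e)$ in (\ref{zhi}) then gives $f=f+f=e+e=e$.

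This imports Proposition 5 of Catino--Mazzotta--Stefanelli, which is legitimate and not circular. The paper, however, stays inside (1). Use its third formula with $u$ in place of $x$, together with $u+u=u$:
\[
u^{-1}(e+u+y)=u^{-1}+u^{-1}(u+u+y)=u^{-1}+u^{-1}(u+y)=u^{-1}(e+y).
\]
Cancelling $u^{-1}$ in the group gives $e+u+y=e+y$. Then (2) gives $x+u+y=x+e+u+y=x+e+y=x+y$. This is exactly your fallback made concrete, and it keeps the lemma self-contained.
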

\begin{proof}
(1) By (\ref{dengshi}), we have $$x(x^{-1}+y)=xx^{-1}+x(x^{-1}+y)=e+x(x^{-1}+y)$$ and
$$x+x(x^{-1}+y)=xe+x(x^{-1}+y)=x(e+y).$$ Substituting $x$ by $x^{-1}$, we obtain $x^{-1}+x^{-1}(x+y)=x^{-1}(e+y)$.

(2)  By (\ref{dengshi}), we have $$x+y=e(x+y)=ex+e(e^{-1}+y)=xe+e(e+y)=x+e+y.$$

(3) Let $x=e+e$. Then $x+e=e+x=x=x+x$ by (2). This implies that
$$x^2=x(e+e)\overset{\rm (\ref{dengshi})}{=}xe+x(x^{-1}+e)=x+x(x^{-1}+x^{-1}x)$$$$=x+x(x^{-1}+x^{-1}(x+e))\overset{\rm (1)}{=}x+x x^{-1}(e+e)=x+x x^{-1}x=x+x=x.$$
This gives that $e+e=x=e$.

(4) Let $u\in E(S,+)$. Then $u=u+u$. By (1), $$u^{-1}(e+u+y)=u^{-1}+u^{-1}(u+u+y)=u^{-1}+u^{-1}(u+y)=u^{-1}(e+y).$$
This implies that $e+u+y=e+y$, and so $$x+u+y\overset{\rm (2)}{=}x+e+u+y=x+e+y\overset{\rm (2)}{=}x+y.$$

(5) Substituting $x$ by $x+y$ and $y$ by $y(y^{-1}+e)$ in the third equality in (1), respectively, $$(x+y)^{-1}+(x+y)^{-1}(x+y+y(y^{-1}+e))=(x+y)^{-1}(e+y(y^{-1}+e)).$$ This gives that
$$(x+y)^{-1}+(x+y)^{-1}(x+y(e+e))=(x+y)^{-1} y(y^{-1}+e)$$ by item (1).  This together with item (3) implies that $$(x+y)^{-1}+e=(x+y)^{-1} y(y^{-1}+e),$$ and so
$(x+y)((x+y)^{-1}+e)= y (y^{-1}+e)$.

(6) By item (3) in the present lemma, we have $e+e=e$, and so $$e+(e+x)(e+y)=e+(e+x)e+(e+x)((e+x)^{-1}+y)$$$$=e+e+x+(e+x)((e+x)^{-1}+y)=e+x+(e+x)((e+x)^{-1}+y)$$$$=(e+x)e+(e+x)((e+x)^{-1}+y)=(e+x)(e+y).$$

(7) By (\ref{dengshi}) and items (1) and (2) of the present lemma, we have  $$x(y+x^{-1}(x+z))=xy+x(x^{-1}+x^{-1}(x+z))$$$$=xy+xx^{-1}(e+z)=xy+e+z=xy+z,$$ which gives item (7).
\end{proof}
To give the first main result of this note, we first recall the notion of rectangular groups. Let $(G,+)$ be a group with identity $0$ and $L$ and $R$ be a left zero semigroup and a right zero semigroup, respectively. Then $M=I\times G\times \Lambda$ forms a semigroup with respect to the following addition:
$$(i,g,\lambda)+(j,h,\mu)=(i,g+h, \mu)\mbox{  for all }(i,g,\lambda),(j,h,\mu)\in I\times G\times \Lambda.$$
Observe that $$E(M,+)=\{(i,0,\lambda)\mid i\in I, \lambda\in \Lambda\}.$$ A semigroup which is isomorphic to some semigroup constructed above is called a {\em rectangular group}.  A semigroup $(S,+)$ is called {\em regular} if for all $a\in S$, there is $a'\in S$ such that $a+a'+a=a$. A regular semigroup $(S,+)$ is called {\em orthodox} if $E(S,+)$ is a subsemigroup of $(S,+)$.  Rectangular groups are orthodox semigroups.  A semigroup $(S,+)$ is called  {\em left cancellative} (respectively, {\em right cancellative}) if for all $x,y,z\in S$, the equation $z+x=z+y$ (respectively, $x+z=y+z$) implies that $x=y$.
 \begin{lemma}\label{rectangular}
Let $(S,+)$ be a rectangular group, $x,y\in S$ and $u\in E(S,+)$.

(1) If $x+y=u$, then $u+x=x$.

(2) There exists $v\in E(S,+)$ such that $v+x=x+v=x$.

(3) If $x+u=u$ or $u+x=u$, then $x\in E(S,+)$.

(4) For all $a\in S$, there is a unique $\pi_u(a)\in S$ such that $$a+\pi_u(a)+a=a, \pi_u(a)+a+\pi_u(a)=\pi_u(a) \mbox{ and }\pi_u(a)+u=\pi_u(a)=u+\pi_u(a).$$
We call this $\pi_u(a)$ the negative of $a$ with respect  to $u$.
Obviously, $\pi_u(u)=u$. If $a,b\in S$ and $x\in E(S,+)$, we have $$\pi_u(x), \pi_u(a)+a, a+\pi_u(a)\in E(S,+) \mbox{ and } \pi_u(a+b)=\pi_u(b)+\pi_u(a).$$

(5) $(u+S,+)$ and $(S+u,+)$ are left and right cancellative orthodox semigroups, respectively, where $u+S=\{u+s\mid s\in S\}$ and $S+u=\{s+u\mid s\in S\}$. Moreover, $E(u+S)=\{e+x\mid x\in E(S,+)\}=u+E(S,+)$ and $E(S+u)=\{x+u\mid x\in E(S,+)\}=E(S,+)+u$.
\end{lemma}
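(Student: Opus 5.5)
Since every part of the statement is invariant under isomorphism, I will assume from the start that $S=I\times G\times\Lambda$ with the addition $(i,g,\lambda)+(j,h,\mu)=(i,g+h,\mu)$. Then $E(S,+)=\{(i,0,\lambda)\}$, and each item becomes a coordinatewise computation. Write $x=(i,g,\lambda)$, $y=(j,h,\mu)$ and $u=(k,0,\nu)$.

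For (1), $x+y=(i,g+h,\mu)=u$ forces $i=k$. Hence $u+x=(k,0+g,\lambda)=(i,g,\lambda)=x$.

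For (2), I take $v=(i,0,\lambda)$, which gives $v+x=x+v=x$ directly.

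For (3), $x+u=(i,g,\nu)=u$ gives $g=0$. Similarly, $u+x=(k,g,\lambda)=u$ gives $g=0$. In both cases $x$ is idempotent.

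For (4), let $a=(i,g,\lambda)$. The condition $\pi_u(a)+u=\pi_u(a)=u+\pi_u(a)$ forces $\pi_u(a)=(k,h,\nu)$ for some $h\in G$. The equation $a+\pi_u(a)+a=a$ then reads $(i,2g+h,\lambda)=(i,g,\lambda)$, i.e.\ $g+h+g=g$, so $h=-g$. With $h=-g$ the equation $\pi_u(a)+a+\pi_u(a)=\pi_u(a)$ also holds. This proves existence and uniqueness, with $\pi_u(a)=(k,-g,\nu)$. The remaining claims follow from this formula:
\begin{itemize}
\item $\pi_u(u)=u$, since $u$ has middle coordinate $0$;
\item $\pi_u(x)$ has middle coordinate $0$ when $x\in E(S,+)$, so it is idempotent;
\item $\pi_u(a)+a=(k,0,\lambda)$ and $a+\pi_u(a)=(i,0,\nu)$ are idempotents;
\item for $b=(j,h',\mu)$ we have $a+b=(i,g+h',\mu)$, so $\pi_u(a+b)=(k,-(g+h'),\nu)=(k,-h'-g,\nu)=\pi_u(b)+\pi_u(a)$.
\end{itemize}
The last point is the one place where the noncommutativity of $G$ matters, and it is handled by the identity $-(g+h')=-h'-g$.

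For (5), $u+S=\{(k,g,\lambda)\}\cong G\times\Lambda$, which is a right group. It is closed under $+$, and in it $z+x=z+y$ forces equality of the $G$- and $\Lambda$-coordinates, so it is left cancellative. It is regular with idempotents $\{(k,0,\lambda)\}$, which are closed under $+$, so it is orthodox. The idempotent set $\{(k,0,\lambda)\}$ equals $u+E(S,+)$. The case of $S+u$ is dual.

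The only step needing care is the uniqueness in (4); the rest is bookkeeping. I read the statement's ``$e+x$'' in (5) as ``$u+x$''.
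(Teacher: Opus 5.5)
Your proposal is correct and matches the paper's proof: both work coordinatewise in $I\times G\times\Lambda$, take $\pi_u(a)=(k,-g,\nu)$, and get uniqueness from $\pi_u(a)+u=u+\pi_u(a)=\pi_u(a)$ together with $g+h+g=g$. One cosmetic slip: writing the middle coordinate as $2g+h$ is inaccurate when $G$ is nonabelian, but you immediately switch to the correct $g+h+g$, so the argument is unaffected.
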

\begin{proof} Assume that $S=I\times G\times \Lambda$ and $x=(i,g,\lambda), y=(j,h,\mu)$ and $u=(k,0, \nu)$.

(1) If $x+y=u$, then $i=k$ and so $u+x=x$.

(2) Obviously, $v=(i,0,\lambda)\in E(S,+)$ and $v+x=x+v=x$.

(3) If $x+u=u$ or $u+x=u$, then $g=0$, and so $x\in E(S,+)$.

(4) Let $a=(i,g,\lambda)$ and $b=(j,h,\mu)$. Take $\pi_u(a)=(k,-g,\nu)$ and $\pi_u(b)=(k,-h,\nu)$. Then it is routine to check that the statements in item (4) hold.
If $a'=(s,q, \omega)\in S$, $a+a'+a=a$ and $a'+u=u+a'=a'$. Then $g+q+g=g$  and $s=k, \omega=\nu$, which implies that $a'=\pi_u(a)$.

(5) This can be obtained by simple calculations.
\end{proof}

The following result improves \cite[Proposition 5]{Catino-Mazzotta-Stefanelli}.
\begin{theorem}\label{main1}
Let $(S,+,\cdot)$ be a left semibrace. Then $(S,+)$ forms a rectangular group.
\end{theorem}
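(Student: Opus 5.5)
We will use the classical characterization (Clifford--Preston; Petrich--Reilly) of rectangular groups: a semigroup is a rectangular group if and only if it is completely simple and its idempotents form a subsemigroup; equivalently, if and only if it is a union of groups and its idempotents form a rectangular band. So the plan is to show three things: $E(S,+)$ is a rectangular band, every element of $(S,+)$ lies in a subgroup, and $(S,+)$ is simple.

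\textbf{Step 1: the idempotents.} By Lemma \ref{key1}(4), $x+u+y=x+y$ for every $u\in E(S,+)$. Hence for $u,v\in E(S,+)$ we get $(u+v)+(u+v)=u+v+v=u+v$, so $E(S,+)$ is a subsemigroup, and $u+v+u=u+u=u$, so it is a rectangular band. Moreover $e\in E(S,+)$ by Lemma \ref{key1}(3).

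\textbf{Step 2: solving equations in $(S,+)$.} Next we would produce enough solutions of equations to get regularity and simplicity. Lemma \ref{key1}(1) with $y=e$, together with $e+e=e$, gives
$$x+x(x^{-1}+e)=x \qquad\text{for all } x\in S.$$
By Lemma \ref{key1}(5) and (6), elements of the form $y(y^{-1}+e)$ are well behaved; we would check, using Lemma \ref{key1}(1), that $w_x:=x(x^{-1}+e)$ is idempotent, so every $x$ has an idempotent right identity. Similarly, for $x=(e+b)^{-1}$ the same identity gives $x+x(x^{-1}+b)=x(e+b)=e$, so such $x$ have right inverses with respect to $e$. Combining this with Lemma \ref{key1}(7), $xy+z=x(y+x^{-1}(x+z))$, we would translate equations $a+t=c$ into equations of the form $e+t'=c'$. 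The aim is to show that for all $a,c$ there are $t,s$ with $a+t=c+w$ and $s+a=w'+c$ for suitable idempotents $w,w'$. This yields regularity: for each $a$ there is $a'$ with $a+a'+a=a$. It also yields simplicity, since Step 1 makes any two idempotents $\mathcal{D}$-related via $u=u+v+u$.

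\textbf{Step 3: conclusion.} A simple regular semigroup whose idempotents form a rectangular band has all idempotents primitive. Indeed, if $v\le u$ then $v=u+v+u=u$ by Step 1. Hence $(S,+)$ is completely simple and orthodox, and therefore a rectangular group. Concretely, one can also check directly that $G=e+S+e$ is a group with identity $e$ and that $S\cong (S+e)\cap E\,\times G\times (e+S)\cap E$ via $a\mapsto$ (the components determined by $a$ and its right and left idempotent identities), using $x+u+y=x+y$.

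The main obstacle is Step 2, namely producing, purely from the semibrace axiom, a right (and left) inverse of an arbitrary $a$ modulo idempotents. Right inverses are easy only for elements of the form $(e+b)^{-1}$. To reach arbitrary $a$, I would first try to exploit the maps $\lambda_x(y)=x(x^{-1}+y)$. The axiom applied to $x^{-1}+y$ and $z$ shows $\lambda_x(y+z)=\lambda_x(y)+\lambda_x(z)$, so each $\lambda_x$ is an additive endomorphism, and $\lambda_x\lambda_y=\lambda_{xy}$ modulo $e+{}$ by Lemma \ref{key1}(1). This would let us transport the solvability available at elements $(e+b)^{-1}$ to all of $S$.
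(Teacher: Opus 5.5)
There is a genuine gap, and it is the step you flag yourself: you never prove that $(S,+)$ is regular, and Step 3 cannot start without it. Step 2 actually establishes two things. First, each $x$ has the idempotent right identity $w_x=x(x^{-1}+e)=\overline{\lambda}_x(e)$. Second, elements of the form $(e+b)^{-1}$ have right inverses modulo $e$. To extend the second fact to arbitrary elements you would at least need every element of $e+S$ to be of the form $(e+b)^{-1}$, i.e.\ $e+S$ closed under inversion in $(S,\cdot)$. The paper obtains this only in Lemma~\ref{subbrace}, and it uses the rectangular-group structure you are trying to prove. Your proposed ``transport'' via the maps $\overline{\lambda}_x$ is not carried out, so no $z'$ with $z+z'+z=z$ is ever produced. (You also ask for more than you need. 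Once $(S,+)$ is regular, each $a$ is $\mathcal{R}$-related to the idempotent $a+a'$, and Step 1 gives $u\,\mathcal{R}\,u+v\,\mathcal{L}\,v$ for idempotents $u,v$. So simplicity is automatic, and the two-sided solvability of $a+t=c+w$, $s+a=w'+c$ is unnecessary.)

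The missing idea is a left-sided identity that writes the idempotent $w_z$ as $z'+z$. The paper gets it from Lemma~\ref{key1}(7), $xy+z=x(y+x^{-1}(x+z))$. Take $y=x^{-1}+x^{-1}$; using $x^{-1}+x^{-1}(x+z)=x^{-1}(e+z)$ from Lemma~\ref{key1}(1), this gives $x(x^{-1}+x^{-1})+z=x(x^{-1}+x^{-1}(e+z))$. Now put $x=e+z$, so that $x^{-1}(e+z)=e$. Lemma~\ref{key1}(5), with $e$ in place of $x$ and $z$ in place of $y$, gives $(e+z)((e+z)^{-1}+e)=z(z^{-1}+e)$. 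Hence $z'=(e+z)\bigl((e+z)^{-1}+(e+z)^{-1}\bigr)=\overline{\lambda}_{e+z}((e+z)^{-1})$ satisfies $z'+z=w_z$, and therefore $z+z'+z=z+z(z^{-1}+e)=z(e+e)=z$. This is an instance of your $\overline{\lambda}$-idea, but applied on the left rather than the right.

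With this inserted as Step 2, your Steps 1 and 3 do finish the proof, and by a different route from the paper. The paper combines regularity with the rectangular-semigroup property of Catino--Mazzotta--Stefanelli and an exercise in Petrich--Reilly. You instead derive orthodoxy and primitivity of idempotents directly from $x+u+y=x+y$, so your ending is self-contained.
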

\begin{proof}
Let $x,y,z\in S$. Then by (\ref{dengshi}) and (1) and (2) in Lemma \ref{key1}, we have  $$x(y+x^{-1}(x+z))=xy+x(x^{-1}+x^{-1}(x+z))=xy+xx^{-1}(e+z)=xy+e+z=xy+z.$$
Take $y=x^{-1}+x^{-1}$ in the above equality. Then we obtain
\begin{equation}\label{key2}
x(x^{-1}+x^{-1})+z=x(x^{-1}+x^{-1}+x^{-1}(x+z))=x(x^{-1}+ x^{-1}(e+z))
\end{equation} by Lemma \ref{key1} (1). Take $x=e+z$ in (\ref{key2}). By Lemma \ref{key1} (5), we
have $$(e+z)((e+z)^{-1}+(e+z)^{-1})+z$$$$=(e+z)((e+z)^{-1}+ (e+z)^{-1}(e+z))=(e+z)((e+z)^{-1}+ e)=z(z^{-1}+e).$$
Adding $z$ on the left side of the above equality, we have $$z+(e+z)((e+z)^{-1}+(e+z)^{-1})+z=z+z(z^{-1}+e)=z(e+e)=ze=z$$ by  (1) and (3) in Lemma \ref{key1}.
Thus $(S,+)$ forms a regular semigroup. By \cite[Proposition 5]{Catino-Mazzotta-Stefanelli}, $(S,+)$ is a rectangular  semigroup (see (\ref{zhi}) for the definition).
In view of \cite[Exercises III.5.15 (iii)]{Petrich-Reilly},  $(S,+)$ forms a rectangular group.
\end{proof}

\section{Strong left semibraces}
In the theory of left semibraces, the following maps are important. Let $(S,+,\cdot)$ be a left semibrace and $a\in S$. Define
$$\overline{\lambda}_{a} : S \longrightarrow S: b\mapsto a(a^{-1}+b),\,\,\,\,\,\, \overline{\rho}_a :  S \longrightarrow S : b \mapsto   (b^{-1}+a)^{-1}a,$$
$$\hat{r}: (S, \cdot) \longrightarrow \textup{Map}(S,S),\, a\mapsto \overline{\lambda}_a\,\,\,\, \overline{\rho}: (S, \cdot) \longrightarrow \textup{Map}(S,S),\, a\mapsto \overline{\rho}_a.$$
Observe that $e+S$ and $S+e$ are subsemigroups of $(S,+)$ by Lemma \ref{key1} (2). By \cite[Lemmas 2.12 and 2.13]{Jespers-Van Antwerpen}, we have the following result.
\begin{lemma}[\cite{Jespers-Van Antwerpen}]\label{lambdaidempot}
Let $(S,+,\cdot)$  be a left semibrace. For any $a \in S$, $\overline{\lambda}_a \in \textup{End}(S,+)$. Moreover, $\overline{\lambda}$ is a semigroup
	homomorphism. Furthermore, for any $a \in S$, we have   $$\overline{\lambda}_a(S)\subseteq e+S,\,\,\, \overline{\lambda}_a(E(S,+))
	\subseteq E(e+S),\,\,\,\overline{ \rho}_a(S)\subseteq S+e.$$
\end{lemma}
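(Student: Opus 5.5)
We argue directly from (\ref{dengshi}) and Lemma \ref{key1}; the cited work \cite{Jespers-Van Antwerpen} gives the same facts. We need four things: each $\overline{\lambda}_a$ is an additive endomorphism; $\overline{\lambda}_{ab}=\overline{\lambda}_a\overline{\lambda}_b$; the image inclusions for $\overline{\lambda}_a$; and the image inclusion for $\overline{\rho}_a$.

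\emph{Endomorphism.} For $b,c\in S$, (\ref{dengshi}) applied with $y=a^{-1}+b$ and $z=c$ gives
$$a(a^{-1}+b+c)=a(a^{-1}+b)+a(a^{-1}+c),$$
that is, $\overline{\lambda}_a(b+c)=\overline{\lambda}_a(b)+\overline{\lambda}_a(c)$.

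\emph{Homomorphism.} We must show $ab((ab)^{-1}+c)=a(a^{-1}+b(b^{-1}+c))$. Expand the left side by (\ref{dengshi}) with $x=a$, $y=b(b^{-1}a^{-1}+c)$ and $z$ chosen so that the pieces match. The cleaner route is Lemma \ref{key1}(7), $xy+z=x(y+x^{-1}(x+z))$, which lets us move a left factor out of a sum. Compute $a(a^{-1}+b(b^{-1}+c))$ by writing $a^{-1}+b(b^{-1}+c)$ as $b\,(\,b^{-1}a^{-1}+\cdots)$ via (7) with $x=b$. Then (\ref{dengshi}) and the absorption $x+e+y=x+y$ from Lemma \ref{key1}(2) collapse the inner term to $b^{-1}a^{-1}+c$. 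This bookkeeping is the main obstacle; if it stalls, we verify the identity after left-multiplying both sides by $(ab)^{-1}$ and using Lemma \ref{key1}(1).

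\emph{Images.} By Lemma \ref{key1}(1), $\overline{\lambda}_a(b)=e+\overline{\lambda}_a(b)\in e+S$. If $u\in E(S,+)$, then $\overline{\lambda}_a(u)$ is idempotent because $\overline{\lambda}_a$ is an endomorphism, and it lies in $e+S$. Hence $\overline{\lambda}_a(u)\in E(e+S)$.

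For $\overline{\rho}_a(b)=(b^{-1}+a)^{-1}a$, apply Lemma \ref{key1}(5) with $x=b^{-1}$ and $y=a$:
$$(b^{-1}+a)^{-1}+e=(b^{-1}+a)^{-1}a(a^{-1}+e).$$
Write $w=(b^{-1}+a)^{-1}a$. It then suffices to show $w+e=w$. Using Lemma \ref{key1}(1), $a(a^{-1}+e)=e+a(a^{-1}+e)$, and $x+x(x^{-1}+e)=x(e+e)=x$. Combining these with (\ref{dengshi}) for $x=(b^{-1}+a)^{-1}$ gives $w+e=w$, so $\overline{\rho}_a(b)\in S+e$.
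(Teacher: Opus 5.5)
Your proposal is correct. The difference from the paper is that the paper gives no argument at all: it imports the statement from Jespers--Van Antwerpen (their Lemmas 2.12 and 2.13). You instead derive everything from the axiom (\ref{dengshi}) and Lemma \ref{key1}. That lemma precedes this one and does not use it, so there is no circularity. Your version makes the section self-contained at the cost of a few lines, while the citation buys brevity.

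Two of your steps are only sketched, but both close with the ingredients you name, and more simply than you suggest.

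For the homomorphism property there is no bookkeeping obstacle and no need for Lemma \ref{key1}(7). Applying (\ref{dengshi}) with $x=b$, $y=b^{-1}a^{-1}$, $z=c$ gives directly $b(b^{-1}a^{-1}+c)=a^{-1}+b(b^{-1}+c)$. Hence $\overline{\lambda}_a\overline{\lambda}_b(c)=ab\bigl((ab)^{-1}+c\bigr)=\overline{\lambda}_{ab}(c)$.

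For $\overline{\rho}_a$, your ``combining'' step should be written out explicitly. Put $x=(b^{-1}+a)^{-1}$ and $w=xa=\overline{\rho}_a(b)$, and apply (\ref{dengshi}) with $y=a$ and $z=a(a^{-1}+e)$:
\begin{itemize}
\item Since $a+a(a^{-1}+e)=a(e+e)=a$ by Lemma \ref{key1}(1),(3), the left side $x(a+a(a^{-1}+e))$ is just $w$.
\item Since $x^{-1}+a(a^{-1}+e)=b^{-1}+a(e+e)=x^{-1}$, the second summand on the right is $xx^{-1}=e$.
\end{itemize}
So $w=w+e\in S+e$. In particular, neither Lemma \ref{key1}(5) nor the identity $a(a^{-1}+e)=e+a(a^{-1}+e)$ is actually needed.
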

It is well known that $\overline{\rho}$ is a semigroup anti-homomorphism in the case of skew left braces. However, $\overline{\rho}$ may not be a semigroup anti-homomorphism in general left semibraces, see \cite{Jespers-Van Antwerpen}. For this reason, we propose the following notion for convenience.
\begin{defn}
A left semibrace $(S,+,\cdot)$ is called {\em strong} if $\overline{\rho}$ is a semigroup anti-homomorphism.
\end{defn}
We observe that strong left semibraces have been investigated extensively in \cite{Jespers-Van Antwerpen}.
In this section, we shall give some characterizations for  strong left semibraces.
Our result improves  \cite[Proposition 2.14 and Theorem 1]{Jespers-Van Antwerpen} and \cite[Theorem 3]{Catino-Colazzo-Stefanelli4}.
\begin{theorem}\label{solution} For a left semibrace $(S,+,\cdot)$, the following statements are equivalent:
\begin{itemize}
\item[(1)] $\rho$ is strong.
\item[(2)] $c+a(e+b)=c+ab$, for all $a, b, c\in S$.
\item[(3)] $e+a+e=e+au+e$, for all $a\in S$ and $u\in E(S,+)$.
\item[(4)] The map $\overline{r}_S:S\times S\rightarrow S\times S, (a,b)\mapsto (\overline{\lambda}_a(b),\overline{\rho}_b(a))$ is a  set-theoretic  solution  of the Yang-Baxter equation.
\item[(5)] $a+b(b^{-1}+c)(e+(b^{-1}+c)^{-1}c)=a+b(e+c)$,  for all $a, b, c\in S$.
\end{itemize}
\end{theorem}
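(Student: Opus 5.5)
I plan to prove the cycle $(1)\Leftrightarrow(2)\Leftrightarrow(3)$, then $(2)\Leftrightarrow(5)$, and to obtain $(1)\Leftrightarrow(4)$ from the Jespers--Van Antwerpen theorem. The main tools are Lemma \ref{key1}, Theorem \ref{main1} and Lemma \ref{rectangular}.

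\emph{Condition (1) in terms of $+$.} The key computation is to write the anti-homomorphism condition $\overline{\rho}_{ab}=\overline{\rho}_b\overline{\rho}_a$ additively. Evaluating at $c$ gives
$$(c^{-1}+ab)^{-1}ab=\bigl((\overline{\rho}_a(c))^{-1}+b\bigr)^{-1}b .$$
Here $(\overline{\rho}_a(c))^{-1}=a^{-1}(c^{-1}+a)$. Using Lemma \ref{key1}(7) in the form $a^{-1}(c^{-1}+a)+b=a^{-1}\bigl(c^{-1}+a+a(a^{-1}+b)\bigr)=a^{-1}(c^{-1}+a(e+b))$, together with Lemma \ref{key1}(1), the identity becomes
$$(c^{-1}+ab)^{-1}ab=(c^{-1}+a(e+b))^{-1}ab .$$
Cancelling $ab$ in the group and replacing $c^{-1}$ by $c$ yields exactly (2). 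So $(1)\Leftrightarrow(2)$ should be a direct manipulation, and the only care needed is to invoke the correct item of Lemma \ref{key1} at each step.

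\emph{$(2)\Leftrightarrow(3)$.} By Theorem \ref{main1}, $(S,+)$ is a rectangular group, say $I\times G\times\Lambda$. For $(2)\Rightarrow(3)$, take $c=e$ and $b=u\in E(S,+)$. Then $e+a(e+u)=e+au$, and since $e+u\in E(S,+)$ this suggests passing from $e+u$ to general idempotents. Then add $e$ on the right, using Lemma \ref{key1}(4) to absorb idempotents, to get $e+a+e=e+au+e$. I expect this step to need the observation $e+a(e+u)+e=e+a+e$. One way to get it is to write $a(e+u)=a+a(a^{-1}+u)$ via Lemma \ref{key1}(1), and then use that $a(a^{-1}+u)=\overline{\lambda}_a(u)$ is idempotent by Lemma \ref{lambdaidempot}; by Lemma \ref{key1}(4), $e+a+\overline{\lambda}_a(u)+e=e+a+e$.

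For the converse $(3)\Rightarrow(2)$, the strategy is coordinatewise. The element $c+x$ only depends on the $I$-coordinate of $c$ and on the $G\times\Lambda$ coordinates of $x$. Condition (3) says $a$ and $au$ agree in the $G$-coordinate, and also, after sandwiching, in the data visible from $e+\cdot+e$. We must show that $ab$ and $a(e+b)$ agree in the $G$- and $\Lambda$-coordinates. Write $e+b=b+\pi_e(b)+\dots$; more precisely, find an idempotent $u$ with $e+b=bu'$ for some suitable $u'$ related to $b^{-1}(e+b)$. Then apply (3) to $ab$ in place of $a$, and remove the left $e$ via $c+e+y=c+y$ from Lemma \ref{key1}(2). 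The right-hand $e$ is the delicate point, since it kills the $\Lambda$-coordinate. To recover $\Lambda$, I would use that $\overline{\lambda}_{ab}$ maps into $e+S$ and Lemma \ref{key1}(6). \textbf{This step is the main obstacle}: showing that $b^{-1}(e+b)$ behaves like an idempotent-translate, and controlling the right $\Lambda$-component.

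\emph{$(2)\Leftrightarrow(5)$ and (4).} Expand $b(b^{-1}+c)(e+(b^{-1}+c)^{-1}c)$ using (\ref{dengshi}) with $x=b(b^{-1}+c)$. Under (2) the factor $(e+\cdot)$ can be dropped after a left summand $a$, and the product collapses to $b(b^{-1}+c)(b^{-1}+c)^{-1}c$-type terms, giving $a+b(e+c)$. Conversely, specialising $b=e$ in (5) gives $a+(e+c)(e+(e+c)^{-1}c)=a+e+c$. Reading this via Lemma \ref{key1}(1),(7) yields (2) for elements of the form $e+c$, and translating by group multiplication extends it to all of (2). Finally, $(1)\Leftrightarrow(4)$ follows from \cite[Theorem 5.1]{Jespers-Van Antwerpen}, where $\overline{r}_S$ is shown to be a solution exactly under the anti-homomorphism condition. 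The direction $(4)\Rightarrow(1)$ comes from comparing the second components of the two sides of (\ref{Y-B}), which reproduces the identity $\overline{\rho}_{ab}=\overline{\rho}_b\overline{\rho}_a$ after using that $\overline{\lambda}$ is a homomorphism (Lemma \ref{lambdaidempot}).
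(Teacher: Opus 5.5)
Your direct proof of $(1)\Leftrightarrow(2)$ is correct: Lemma \ref{key1}(7),(1) give $\overline{\rho}_b\overline{\rho}_a(c)=(c^{-1}+a(e+b))^{-1}ab$, so the anti-homomorphism condition becomes exactly $c^{-1}+ab=c^{-1}+a(e+b)$. This is a self-contained substitute for the citation the paper uses. Your $(2)\Rightarrow(3)$ and $(2)\Rightarrow(5)$ also agree with the paper.

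\textbf{First gap: $(3)\Rightarrow(2)$.} You leave this open, as you say yourself. You never show that $u'=b^{-1}(e+b)$ is idempotent, so writing $e+b=bu'$ is unjustified. Even granting it, (3) applied to $ab$ only gives $c+ab+e=c+a(e+b)+e$, which loses the $\Lambda$-coordinate. The missing idea is to put the idempotent on the \emph{left} of $b$ and split with the semibrace axiom:
\begin{itemize}
\item[(i)] By Lemma \ref{rectangular}(2), choose $v\in E(S,+)$ with $b=v+b$.
\item[(ii)] The axiom gives $ab=av+\overline{\lambda}_a(b)$, and $\overline{\lambda}_a(b)=e+\overline{\lambda}_a(b)$ by Lemma \ref{key1}(1).
\item[(iii)] Insert and remove the outer $e$'s with Lemma \ref{key1}(2) and (ii), apply (3) with $u=v$ in the middle, and finish with Lemma \ref{key1}(1):
\[
c+ab=c+(e+av+e)+\overline{\lambda}_a(b)=c+(e+a+e)+\overline{\lambda}_a(b)=c+a+\overline{\lambda}_a(b)=c+a(e+b).
\]
\end{itemize}
The trailing $e$ is harmless precisely because the next summand already begins with $e$.

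\textbf{Second gap: $(5)\Rightarrow(2)$, and with it every link from (4) or (5) back to (1)--(3).} Setting $b=e$ in (5) gives (2) only for the special pairs $(t,t^{-1}c)$ with $t=e+c$. ``Translating by group multiplication'' has no mechanism behind it: left multiplication is not additive ($x(y+z)=xy+x(x^{-1}+z)$, not $xy+xz$), so these instances do not spread to arbitrary $(a,b)$. The paper needs (5) with general $b$:
\begin{itemize}
\item[(i)] From the case $b=e$ it extracts $(e+y)^{-1}(e+y+z)=e+(e+y)^{-1}(y+z)$ and $(e+y)^{-1}(e+z)=(e+y)^{-1}+(e+y)^{-1}(y+z)$.
\item[(ii)] It takes $b=(zy)^{-1}$ and rewrites $zy+u=z(y+z^{-1}(z+u))$ by Lemma \ref{key1}(7).
\item[(iii)] It makes the two substitutions $(z,u)\mapsto(z^{-1},z^{-1}(z+u))$ and $(z,u)\mapsto((e+z)^{-1},(e+z)^{-1}(z+u))$. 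Comparing the outcomes gives $x+y^{-1}(e+z+u)=x+y^{-1}(z+u)$.
\item[(iv)] Replacing $y$ by $y^{-1}$ and choosing $u$ with $z+u=z$ yields (2).
\end{itemize}

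Your fallback through (4) does not close this hole. The Jespers--Van Antwerpen result you cite supplies the sufficiency $(1)\Rightarrow(4)$; the converse is what this theorem adds. Comparing components of the braid relation does not produce $\overline{\rho}_{ab}=\overline{\rho}_b\overline{\rho}_a$. A computation in the style of your $(1)\Leftrightarrow(2)$ argument shows the second-component identity is equivalent to $a^{-1}+b(e+c)=a^{-1}+\overline{\lambda}_b(c)\bigl(e+\overline{\rho}_c(b)\bigr)$, which is exactly (5). The same holds for the third component, using your formula $\overline{\rho}_y\overline{\rho}_x(w)=(w^{-1}+x(e+y))^{-1}xy$ together with $\overline{\lambda}_b(c)\overline{\rho}_c(b)=bc$. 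So $(4)\Rightarrow(1)$ reduces to the same unproved $(5)\Rightarrow(2)$, which is the genuinely new content of the theorem.
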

\begin{proof}

{\em (1) is equivalent to (2)}. See \cite[Proposition 2.14]{Jespers-Van Antwerpen}.

{\em (2) implies (3)}. Let $a\in S$ and $u\in E(S,+)$. Then by item (2), we have $$e+au+e=e+a(e+u)+e=e+ae+a(a^{-1}+u)+e=e+a+\lambda_a(u)+e.$$
By Lemma \ref{lambdaidempot}, $\lambda_a(u)\in E(S,+)$. So by Lemma \ref{key1} (4), we have $$e+au+e=e+a+\lambda_a(u)+e=e+a+e.$$

{\em (3) implies (2)}. Let $a,b,c\in S$.
Since $(S,+)$ is a rectangular group by Theorem \ref{main1}, it follows that $b=v+b$  for some $v\in E(S,+)$ by Lemma \ref{rectangular}. This implies that
$$c+ab=c+a(v+b)=c+av+a(a^{-1}+b)=c+e+av+e+a(a^{-1}+b)$$$$=c+e+a+e+a(a^{-1}+b)=c+a+a(a^{-1}+b)=c+a(e+b)$$ by using (\ref{dengshi}) and Lemma \ref{key1} (2), (1) and item (3) in the present lemma.

{\em (4) is equivalent to (5)}. See \cite[Theorem 3]{Catino-Colazzo-Stefanelli4}.

{\em (2) implies (5)}. If (2) holds, then both sides of the identity in (5) are equal to $a+bc$, and so  (5) holds.

{\em (5) implies (2)}. Assume that
\begin{equation}\label{wang1}
a+b(b^{-1}+c)(e+(b^{-1}+c)^{-1}c)=a+b(e+c)
\end{equation}  for all $a, b, c\in S$. Let $a=b=e$ and $c=y$ in (\ref{wang1}) and denote $t=e+y$. Then
$e+t(e+t^{-1}y)=e+y$.  By Lemma \ref{key1} (7) and (\ref{dengshi}),
$$t(e+t^{-1}y)+z=t(e+t^{-1}y+t^{-1}(t+z))=t(e+t^{-1}(y+z)).$$ This implies that
\begin{equation}\label{wang2}
\begin{array}{cc}
e+y+z=e+t(e+t^{-1}(y+z))
=e+(e+y)(e+t^{-1}(y+z))
\\[2mm]=(e+y)(e+t^{-1}(y+z))=(e+y)(e+(e+y)^{-1}(y+z))
\end{array}
\end{equation}
by Lemma \ref{key1} (6), and so
\begin{equation}\label{wang3}
 (e+y)^{-1}(e+y+z)=e+(e+y)^{-1}(y+z).
\end{equation}
By Lemma \ref{key1} (1) and (2), we obtain that
\begin{equation}\label{wang4}
\begin{array}{cc}
(e+y)^{-1}(e+z)=(e+y)^{-1}+ (e+y)^{-1}(e+y+z)\\[2mm]
=(e+y)^{-1}+e+(e+y)^{-1}(y+z)=(e+y)^{-1}+(e+y)^{-1}(y+z).
\end{array}
\end{equation}
Let $a=x, b=(zy)^{-1}$ and $c=u$ in (\ref{wang1}). Then
$$x+y^{-1}z^{-1}(zy+u)(e+(zy+u)^{-1}u)=x+y^{-1}z^{-1}(e+u).$$
By Lemma \ref{key1} (7), we have $zy+u=z(y+z^{-1}(z+u))$. So
\begin{equation}\label{wang5}
x+y^{-1}(y+z^{-1}(z+u))(e+(y+z^{-1}(z+u))^{-1}z^{-1}u)=x+y^{-1}z^{-1}(e+u).
\end{equation}
Substituting $z$ and $u$ by $z^{-1}$ and $z^{-1}(z+u)$ in (\ref{wang5}), respectively, and denoting $A=y+z(z^{-1}+z^{-1}(z+u))$, we have
\begin{equation}\label{wang6}x+y^{-1}A(e+A^{-1}(z+u))=x+y^{-1}z(e+z^{-1}(z+u)).
\end{equation}
By Lemma \ref{key1} (1), (2), we obtain $$A=y+z(z^{-1}+z^{-1}(z+u))=y+zz^{-1}(e+u)=y+e+u=y+u$$ and $e+z^{-1}(z+u)=z^{-1}(z+u)$. Thus (\ref{wang6}) becomes
\begin{equation}\label{wang7}x+y^{-1}(y+u)(e+(y+u)^{-1}(z+u))=x+y^{-1}(z+u)).\end{equation}
Substituting $z$ and $u$ by $(e+z)^{-1}$ and $(e+z)^{-1}(z+u)$ in (\ref{wang5}), respectively, and denoting $$B=y+(e+z)((e+z)^{-1}+(e+z)^{-1}(z+u)),$$$$ C=(e+z)(e+(e+z)^{-1}(z+u)),$$ we have $x+y^{-1}B(e+B^{-1}(z+u))=x+y^{-1}C$. By (\ref{wang4}), (\ref{wang3}) and Lemma \ref{key1} (2), we obtain that $$B=y+(e+z)(e+z)^{-1}(e+u)=y+e+u=y+u$$
and $$C=(e+z)(e+z)^{-1}(e+z+u)=e+z+u.$$ This implies that $$x+y^{-1}(y+u)(e+(y+u)^{-1}(z+u))=x+y^{-1}(e+z+u).$$  This together (\ref{wang7}) gives that
$$x+y^{-1}(e+z+u)=x+y^{-1}(z+u)).$$ Substituting $y$ by $y^{-1}$ and taking $u$ such that $z+u=z$ ( by Theorem \ref{main1} and Lemma \ref{rectangular} (2)), we have
$x+y(e+z)=x+yz$. Thus (2) holds.
\end{proof}
\section{Near left semibraces}
In this section, we introduce the notion of {\em near left semibraces} and prove that a strong left semibrace can induce a near left semibrace, and vice versa. As a consequence, near left semibraces can provide set-theoretical solutions for the Yang-Baxter equation. Some new  characterizations  of strong left semibraces are also obtained.
\begin{defn}Let $(S, +)$ be a semigroup, $(S,\cdot)$ be a group with identity $e$ and $\mu: S\longrightarrow S, x\mapsto \mu(x)$ be a map. Then $(S,+,\cdot, \mu)$ is called a {\em near left semibrace} if
\begin{equation}\label{near}
x(y+z)=xy+\mu(x)+xz  \mbox{ for all } x,y,z\in S.
\end{equation}
\end{defn}
\begin{remark}\label{zhun}
Recall from Doikou  and Rybo{\l}owicz \cite{DoRy22x,DoRy22x1} that a triple $(G,+, \cdot)$ is called a {\em near left brace} if $(G,+)$ is a group with identity $0$, $(G,\cdot)$ is a group and $x(y+z)=xy-x\cdot 0+xz$ for all $x,y,z\in G$.
Let $(S,+,\cdot)$ be a near left semibrace. Let $x\in S$. If $(S,+)$ is also a group with identity $0$, then we have $x\cdot0=x\cdot(0+0)=x\cdot0+\mu(x)+x\cdot 0$. This implies that $\mu(x)=-x\cdot0$. In particular, $\mu(e)=-e\cdot 0=-0=0$. Thus $(S, +, \cdot)$ forms a near left brace.  This shows that near left semibraces are indeed generalizations of near left braces.
\end{remark}

\begin{lemma}\label{near1}Let $(S,+,\cdot, \mu)$ be a near left semibrace.
\begin{itemize}
\item[(1)] $x+\mu(e)+y=x+y$ for all $x,y\in S$.
\item[(2)] $x+\mu(y\mu(e)^{-1})+y+z=x+z$ and $x+z=x+\mu(y)+y\mu(e) +z$ for all $x,y,z\in S$.
\item[(3)] If $e+\mu(x)=\mu(x)$ (respectively, $\mu(x)+e=\mu(x)$, or $\mu(x)+x+\mu(x)=\mu(x)$, or $x+\mu(x)+x=x$) for all $x\in S$, then $x+e+y=x+y$ for all $x,y\in S$.
\end{itemize}
\end{lemma}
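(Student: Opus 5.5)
The plan is to treat the three items in order, each obtained from the defining identity (\ref{near}) by a suitable choice of the multiplier, together with the fact that left multiplication by a fixed element is a bijection of $S$. For (1), I take $x=e$ in (\ref{near}). Since $ey=y$ and $ez=z$, this gives $y+z=y+\mu(e)+z$ for all $y,z\in S$, which is exactly item (1) after renaming.

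For (2), fix $y$ and put $w=y\mu(e)^{-1}$, so that $w\mu(e)=y$. The basic tool is the rewriting
\[
p+\mu(w)+q=w\bigl(w^{-1}p+w^{-1}q\bigr)\qquad\text{for all }p,q\in S,
\]
which is just (\ref{near}) read backwards. Inserting $\mu(e)$ between $w^{-1}p$ and $w^{-1}q$ by item (1) and expanding again with (\ref{near}) gives
\[
p+\mu(w)+y+\mu(w)+q=p+\mu(w)+q .
\]
The aim is then to remove the outer occurrences of $\mu(w)$ so as to reach $x+\mu(w)+y+z=x+z$. To do this I would expand $w(w^{-1}x+\mu(e))=x+\mu(w)+y$, add $z$ on the right, and compare with expansions of products $w(c+d)$ in which $d$ is chosen so that $\mu(w)+wd$ can be absorbed, in the spirit of Lemma \ref{key1}(7). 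The second identity in (2) should follow in the same way, with $w$ replaced by $y$ so that $y\mu(e)$ plays the role of $y$.

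I expect this absorption of $\mu(w)$ to be the main obstacle. Unlike in Section 2, we do not yet know that $(S,+)$ is a rectangular group, so the argument must rely only on associativity, item (1), and the bijectivity of $s\mapsto ws$. The first thing I would try is to expand a three-term product $w(a+b+c)$ in two different ways, with $b=\mu(e)$ and with $a,c$ chosen as $w^{-1}$-translates of $x$ and $z$.

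For (3), I apply the first identity of (2) with $y=\mu(e)^{-1}$, so that $y\mu(e)=e$. Writing $t=\mu(e)^{-1}$, this gives
\[
p+\mu(t)+e+q=p+q \qquad\text{for all } p,q\in S .
\]
If $e+\mu(t)=\mu(t)$, I use this identity twice. With $p=x+e$ and $q=z$ it gives $x+e+z=x+e+\mu(t)+e+z$. The hypothesis turns the right-hand side into $x+\mu(t)+e+z$, which by the same identity equals $x+z$. If instead $\mu(t)+e=\mu(t)$, I take $p=x$ and $q=e+z$, which gives $x+e+z=x+\mu(t)+e+e+z=x+\mu(t)+e+z=x+z$. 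The remaining two hypotheses, $\mu(x)+x+\mu(x)=\mu(x)$ and $x+\mu(x)+x=x$, would be handled similarly: combine the identity above (and its variants from (2) with other choices of $y$) with the hypothesis specialised at suitable elements such as $t$ or $e$, and use item (1) to move $\mu(e)$ in and out.
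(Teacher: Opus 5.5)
Item (1) is fine, but item (2) is not proved. You correctly obtain $p+\mu(w)+y+\mu(w)+q=p+\mu(w)+q$, and then only describe a hope of ``absorbing'' the outer $\mu(w)$. The strategy you propose cannot deliver this. By (\ref{near}) and associativity, both bracketings of $w(a+\mu(e)+c)$ expand to the same word $wa+\mu(w)+w\mu(e)+\mu(w)+wc$. Comparing that word with $w(a+c)$ via (1) only reproduces the identity you already have. Expanding products $w(\cdot)$ always gives words containing $\mu(w)$. To say anything about a bare sum $x+z$, you have to expand $w^{-1}(x+z)$, i.e.\ use (\ref{near}) with the inverse multiplier, and insert $\mu(e)$ \emph{before} $\mu(w^{-1})$:
\[
w^{-1}(x+z)=w^{-1}x+\mu(w^{-1})+w^{-1}z=w^{-1}x+\mu(e)+\mu(w^{-1})+w^{-1}z=w^{-1}x+w^{-1}(w\mu(e)+z).
\]
The last step reads $\mu(e)+\mu(w^{-1})+w^{-1}z$ as $w^{-1}(w\mu(e))+\mu(w^{-1})+w^{-1}z$. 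Multiplying by $w$ and expanding once more with (\ref{near}) gives $x+z=x+\mu(w)+w\mu(e)+z$ for every $w$. Taking $w=y$ gives the second identity of (2), and $w=y\mu(e)^{-1}$ gives the first. This is exactly the paper's argument.

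Given (2), your route for (3) is a valid alternative, with two remarks. First, the identity $p+\mu(t)+e+q=p+q$ with $t=\mu(e)^{-1}$ is the \emph{second} identity of (2) at $y=\mu(e)^{-1}$ (or the first at $y=e$), not the first at $y=\mu(e)^{-1}$. Second, the last two cases, which you leave as ``similar'', still have to be written:
\begin{itemize}
\item For $\mu(x)+x+\mu(x)=\mu(x)$, take $x=e$ and use (1) twice: $p+e+q=p+\mu(e)+e+\mu(e)+q=p+q$.
\item For $x+\mu(x)+x=x$, taking $x=e$ and using (1) gives $e+e=e$. Then $p+e+q=p+\mu(t)+e+e+q=p+\mu(t)+e+q=p+q$.
\end{itemize}
That last step is shorter than the paper's argument, which specialises at $x=\mu(e)$, shows that left multiplication by $\mu(e)$ is additive, and then cancels $\mu(e)$. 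On the other hand, the paper handles the first three cases with (1) alone, so its (3) does not depend on (2), whereas yours does.
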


\begin{proof}
 (1)  Let $x,y\in S$. By (\ref{near}), we have
\begin{equation} \label{wang90}
x+y=e(x+y)=ex+\mu(e)+ey=x+\mu(e)+y \mbox{ for all }x,y\in S.
\end{equation}

(2)   Let $x,y,z\in S$. By (\ref{near}) and item (1), we have
$$x^{-1}(y+z)=x^{-1}y+\mu(x^{-1})+x^{-1}z=x^{-1}y+\mu(e)+\mu(x^{-1})+x^{-1}z=x^{-1}y+x^{-1}(x\mu(e)+z).$$
This implies that $$y+z=x(x^{-1}y+x^{-1}(x\mu(e)+z))=y+\mu(x)+x\mu(e)+z.$$
Substituting $x$ by $x\mu(e)^{-1}$, we obtain that $$y+z=y+\mu(x\mu(e)^{-1})+x +z.$$  Substituting $x$ by $x \mu(e)$, we have $y+z=y+\mu(x)+x\mu(e) +z$. This  gives item (2).

(3) Let $x,y,z\in S$. If $e+\mu(x)=\mu(x)$ for all $x\in S$, then by item (1), we have $$x+y=x+\mu(e)+y=x+e+\mu(e) +y=x+e+y.$$
If $\mu(x)+e=\mu(x)$ for all $x\in S$, then by  item (1), we have $$x+y=x+\mu(e)+y=x+\mu(x)+e +y=x+e+y.$$
If $\mu(x)+x+\mu(x)=\mu(x)$ for all $x\in S$, then by item (1), we have $$x+y=x+\mu(e)+y=x+\mu(e)+e+\mu(e) +y=x+e+y.$$
If $x+\mu(x)+x=x$ for all $x\in S$, then
$\mu(e)+\mu(\mu(e))+\mu(e)=\mu(e).$
This implies that
\begin{equation*}
x+y=x+\mu(e)+y=x+\mu(e)+\mu(\mu(e))+\mu(e)+y=x+\mu(\mu(e))+y,
\end{equation*}
\begin{equation*}\label{wang91} \mu(e)(x+y)=\mu(e)x+\mu(\mu(e))+\mu(e)y=\mu(e)x+\mu(e)y
\end{equation*}
for all $x,y\in S$ by item (1) and (\ref{near}).
This gives that $$\mu(e)(x+e+y)=\mu(e)x+\mu(e)e+\mu(e)y$$$$=\mu(e)x+\mu(e)+\mu(e)y=\mu(e)x+\mu(e)y=\mu(e)(x+y),$$
by item (1), and so  $x+e+y=x+y$.
\end{proof}

\begin{theorem}\label{main2}
Let $(S,+,\cdot, \mu)$ be a near left semibrace, where the identity of the group $(S,\cdot)$ is $e$ and $x^{-1}$ is the inverse of $x$ in $(S,\cdot)$ for all $x\in S$. Define a new binary operation $\circ$ on $S$ as follows: For all $x,y\in S$, let $x\circ y=x\mu(e)^{-1}y$.
Then $(S,+,\circ)$ forms a strong left semibrace, and so $(S,+)$ is a rectangular group and $\mu(e)\in E(S,+)$. Moreover, if we denote $\overline{x}=\mu(e)x^{-1}\mu(e)$ for all $x\in S$, then
$$\overline{r}_S: S\times S\to S\times S,\, (x,y)\mapsto (x\circ(\overline{x}+y),\overline{\overline{x}+y}\circ y)$$
is a set-theoretical solution of the Yang-Baxter equation.
\end{theorem}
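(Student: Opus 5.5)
Throughout, write $m=\mu(e)$. The plan is to verify directly that $(S,\circ)$ is a group and that the left semibrace axiom (\ref{dengshi}) holds for $(S,+,\circ)$. Strongness then follows from the criterion in Theorem \ref{solution}, and everything else follows from earlier results.

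\emph{Step 1: $(S,\circ)$ is a group.} Associativity of $\circ$ is inherited from $(S,\cdot)$, since $(x\circ y)\circ z=xm^{-1}ym^{-1}z=x\circ(y\circ z)$. The element $m$ is a two-sided identity, because $x\circ m=xm^{-1}m=x$ and $m\circ x=mm^{-1}x=x$. The inverse of $x$ is $\overline{x}=mx^{-1}m$, since $x\circ\overline{x}=xm^{-1}mx^{-1}m=m$ and similarly on the other side.

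\emph{Step 2: the semibrace axiom.} Fix $x$ and put $w=xm^{-1}$, so that $x\circ t=wt$ for all $t\in S$. By (\ref{near}),
\[
x\circ(y+z)=w(y+z)=wy+\mu(w)+wz .
\]
On the other hand, applying (\ref{near}) again gives
\[
x\circ y+x\circ(\overline{x}+z)=wy+w\overline{x}+\mu(w)+wz .
\]
Here $w\overline{x}=xm^{-1}mx^{-1}m=m$. By Lemma \ref{near1}(1), the summand $m=\mu(e)$ can be deleted from the middle of a sum, so the two expressions agree. Hence $(S,+,\circ)$ is a left semibrace whose multiplicative identity is $m$. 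Theorem \ref{main1} then shows that $(S,+)$ is a rectangular group, and Lemma \ref{key1}(3), applied to this semibrace, gives $m+m=m$, i.e.\ $\mu(e)\in E(S,+)$.

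\emph{Step 3: strongness.} I would check condition (2) of Theorem \ref{solution}, read in the semibrace $(S,+,\circ)$, where the identity is $m$. The condition becomes $c+a\circ(m+b)=c+a\circ b$. Put $w=am^{-1}$. Then (\ref{near}) gives
\[
a\circ(m+b)=w(m+b)=wm+\mu(w)+wb=a+\mu(am^{-1})+wb .
\]
So we need $c+\mu(am^{-1})+a+wb=c+wb$. This is exactly the first identity of Lemma \ref{near1}(2), taken with $x=c$, $y=a$ and $z=wb$. Therefore the semibrace is strong.

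\emph{Step 4: the solution.} By Theorem \ref{solution}, $(1)\Rightarrow(4)$, the map $(a,b)\mapsto(\overline{\lambda}_a(b),\overline{\rho}_b(a))$ is a solution, where $\overline{\lambda}$ and $\overline{\rho}$ are computed in $(S,+,\circ)$. There the group inverse is $x\mapsto\overline{x}$, so
\[
\overline{\lambda}_x(y)=x\circ(\overline{x}+y),\qquad \overline{\rho}_y(x)=\overline{\overline{x}+y}\circ y .
\]
This is precisely the stated map $\overline{r}_S$.

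The main care point is bookkeeping rather than difficulty. Every earlier result used (Lemma \ref{key1}, Theorem \ref{main1}, Theorem \ref{solution}) must be applied to the new semibrace $(S,+,\circ)$, with identity $m$ in place of $e$ and inverse $\overline{x}$ in place of $x^{-1}$. Meanwhile, the near-semibrace identities (\ref{near}) and Lemma \ref{near1} are always used with the original product $\cdot$.
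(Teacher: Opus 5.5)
Steps 1, 2 and 4 are correct and follow the paper's argument. The gap is in Step 3. From $a\circ(m+b)=a+\mu(am^{-1})+wb$ you get $c+a\circ(m+b)=c+a+\mu(am^{-1})+wb$. So the identity you must prove is $c+a+\mu(am^{-1})+wb=c+wb$.

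You instead wrote $c+\mu(am^{-1})+a+wb=c+wb$, which swaps $a$ and $\mu(am^{-1})$. Since $(S,+)$ is not commutative, this is not a harmless rewriting. Lemma \ref{near1}(2) only deletes a block $\mu(ym^{-1})+y$ (or, in its second form, $\mu(y)+ym$), with the $\mu$-term on the left. Here the block is $a+\mu(am^{-1})$, with the $\mu$-term on the right. So the required identity is not ``exactly'' Lemma \ref{near1}(2), and strongness is not yet established.

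The gap can be closed with what you already have after Step 2, namely that $(S,+,\circ)$ is a left semibrace and $(S,+)$ is a rectangular group.

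\emph{The paper's route.} Insert the idempotent $a+\pi_m(a)$ from Lemma \ref{rectangular}(4) after $\mu(am^{-1})$, using Lemma \ref{key1}(4). This exposes the block $\mu(am^{-1})+a$, which Lemma \ref{near1}(2) deletes. Then remove $a+\pi_m(a)$ again with Lemma \ref{key1}(4).

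\emph{A shorter route.} Put $u=a+\mu(am^{-1})$. Then $u+u=a+\bigl(\mu(am^{-1})+a\bigr)+\mu(am^{-1})=u$ by Lemma \ref{near1}(2), so $u\in E(S,+)$. Lemma \ref{key1}(4), applied to the left semibrace $(S,+,\circ)$, then gives $c+u+wb=c+wb$.

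In both routes the missing ingredient is that idempotents can be removed from inside sums, which makes up for the wrong order of the two terms.
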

\begin{proof}It is well known that $(S,\circ)$ forms a group with identity $\mu(e)$, and the inverse of $x$ in this group is $\overline{x}=\mu(e)x^{-1}\mu(e)$ for all $x\in S$. Let $x,y,z\in S$. Then $$x\circ y+x\circ(\overline{x}+z)=x\mu(e)^{-1} y+x\mu(e)^{-1}(\mu(e)x^{-1}\mu(e)+z)
$$$$\overset{\rm (\ref{near})}{=}x\mu(e)^{-1} y+\mu(e)+\mu(x\mu(e)^{-1})+x\mu(e)^{-1} z$$$$\overset{\rm Lemma\, \ref{near1}\, (1)}{=}x\mu(e)^{-1} y+ \mu(x\mu(e)^{-1})+x\mu(e)^{-1} z=x\mu(e)^{-1}(y+z)=x\circ (y+z).$$
Thus $(S,+,\circ)$ forms a left semibrace. By Theorem \ref{main1}, $(S,+)$ is a rectangular group. By Lemma \ref{key1} (3), we have $\mu(e)\in E(S,+)$.
By Lemma \ref{rectangular} (4), we obtain that $y+\pi_{\mu(e)}(y)\in E(S,+)$. In view of (\ref{near}), Lemma \ref{key1} (4) and Lemma \ref{near1} (2), we have
$$x+y\circ(\mu(e)+z)=x+y\mu(e)^{-1}(\mu(e)+z)=x+y+\mu(y\mu(e)^{-1})+y\mu(e)^{-1}z$$$$=x+y+\mu(y\mu(e)^{-1})+ (y+\pi_{\mu(e)}(y)) +y\mu(e)^{-1}z$$$$=x+y+(\mu(y\mu(e)^{-1})+ y)+\pi_{\mu(e)}(y) +y\mu(e)^{-1}z
$$$$=x+(y+\pi_{\mu(e)}(y)) +y\mu(e)^{-1}z=x+y\mu(e)^{-1}z=x+y\circ z$$
By Theorem \ref{solution}, it follows that $(S,+,\circ)$ is strong and $\overline{r}_S$ is a set-theoretical solution of the Yang-Baxter equation.
\end{proof}

\begin{lemma}\label{near2}
Let $(S,+,\cdot, \mu)$ be a near left semibrace. Then the following statements are equivalent:
\begin{itemize}
\item[(1)]$x+e+y=x+y$ for all $x,y\in S$.
\item[(2)] $(S,+,\cdot)$ is a strong left semibrace.
\item[(3)] $e+e=e$.
\end{itemize}
\end{lemma}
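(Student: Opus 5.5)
The plan is to prove the cycle (1)$\Rightarrow$(2)$\Rightarrow$(3)$\Rightarrow$(1). Throughout I use Theorem \ref{main2}, which holds for any near left semibrace. It says $(S,+,\circ)$ with $x\circ y=x\mu(e)^{-1}y$ is a strong left semibrace, that $(S,+)$ is a rectangular group, and that $\mu(e)\in E(S,+)$.

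\emph{(2)$\Rightarrow$(3).} This is immediate from Lemma \ref{key1} (3) applied to the left semibrace $(S,+,\cdot)$.

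\emph{(3)$\Rightarrow$(1).} Since $(S,+)$ is a rectangular group by Theorem \ref{main2}, write $S=I\times G\times\Lambda$. An idempotent $u=(k,0,\nu)$ satisfies $(i,g,\lambda)+(k,0,\nu)+(j,h,\mu')=(i,g+h,\mu')$. Hence $x+u+y=x+y$ for every $u\in E(S,+)$, and $e+e=e$ gives (1).

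\emph{(1)$\Rightarrow$(2).} First I check that $(S,+,\cdot)$ is a left semibrace. By (\ref{near}) we have $x(x^{-1}+z)=e+\mu(x)+xz$. Therefore
$$xy+x(x^{-1}+z)=xy+e+\mu(x)+xz=xy+\mu(x)+xz=x(y+z),$$
where the middle equality uses (1).

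For strongness I use criterion (2) of Theorem \ref{solution}, namely $c+a(e+b)=c+ab$. The idea is to transport the strongness of $(S,+,\circ)$, which Theorem \ref{main2} provides. The identity of $(S,\circ)$ is $\mu(e)$, so criterion (2) for $\circ$ reads $c+a\circ(\mu(e)+b)=c+a\circ b$, that is,
$$c+a\mu(e)^{-1}(\mu(e)+b)=c+a\mu(e)^{-1}b .$$
Replacing $a$ by $a\mu(e)$ gives $c+a(\mu(e)+w)=c+aw$ for all $a,w,c\in S$. Now take $w=e+b$. Using (1) (or Lemma \ref{near1} (1)) we get $\mu(e)+e+b=\mu(e)+b$. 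Hence
$$c+a(e+b)=c+a(\mu(e)+e+b)=c+a(\mu(e)+b)=c+ab,$$
where the last step is the same identity with $w=b$. Thus $(S,+,\cdot)$ is strong.

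The main obstacle is this strongness step. A direct computation with (\ref{near}) produces $c+a+\mu(a)+ab$, and it is not clear how to absorb $a+\mu(a)$ without first knowing criterion (3) of Theorem \ref{solution}, which would be circular. Routing the argument through the twisted group operation $\circ$ of Theorem \ref{main2} avoids this, because the translation $a\mapsto a\mu(e)$ turns the strongness of $(S,+,\circ)$ into the needed identity.
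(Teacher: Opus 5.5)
Your proof is correct, but it differs from the paper's in two of the three implications.

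For (1)$\Rightarrow$(2), your one-line check of the left semibrace law is more direct than the paper's. You note $x(x^{-1}+z)=e+\mu(x)+xz$ and then delete $e$; the paper instead goes through $xy+z=x(y+x^{-1}(x+z))$ and $x(e+z)=x+x(x^{-1}+z)$.

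For strongness, the obstacle you describe is not real, and nothing circular is needed. Once the left semibrace law holds, Lemma~\ref{key1}~(3) gives $e+e=e$. Hence $x=x(e+e)=x+\mu(x)+x$, so $x+\mu(x)\in E(S,+)$. Lemma~\ref{key1}~(4) then absorbs it: $z+x(e+y)=z+x+\mu(x)+xy=z+xy$. This is exactly the paper's argument, and it is self-contained. Your transport of strongness from $(S,+,\circ)$ along $a\mapsto a\mu(e)$ is valid, but it leans on Theorem~\ref{main2}. The strongness step there uses the negatives $\pi_{\mu(e)}$, hence the rectangular-group structure of Theorem~\ref{main1} and the external results behind it. What your route does show neatly is that strongness of $(S,+,\cdot)$ follows just from $e$ being a middle unit together with strongness of the twisted structure.

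For (3)$\Rightarrow$(1), the paper uses $x=x+\mu(x)+x$ and the last case of Lemma~\ref{near1}~(3), which is elementary but fiddly. Your appeal to the form $I\times G\times\Lambda$ is shorter. You could make it elementary too: apply Lemma~\ref{key1}~(4) to the left semibrace $(S,+,\circ)$ with $u=e\in E(S,+)$. Only the left semibrace law for $\circ$ is needed, and that follows from Lemma~\ref{near1}~(1) alone.

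One small slip: $\mu(e)+e+b=\mu(e)+b$ follows from hypothesis (1), not from Lemma~\ref{near1}~(1). That lemma only lets you delete $\mu(e)$ from the middle of a sum.
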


\begin{proof}{\em (1) implies (2).} Let $x,y,z\in S$. By  (\ref{near}) and item (1),  we have $x^{-1}(xy+z)=y+\mu(x^{-1})+x^{-1}z$ and   $$xy+z=x(y+\mu(x^{-1})+x^{-1}z)=x(y+x^{-1}x+\mu(x^{-1})+x^{-1}z)=x(y+x^{-1}(x+z)),$$
$$x(e+z)=x+\mu(x)+xz=x+e+\mu(x)+xz=x+x(x^{-1}+z).$$ Substituting $z$ by $x(x^{-1}+z)$ in the first equality above, we obtain that
$$xy+x(x^{-1}+z)=x(y+x^{-1}(x+x(x^{-1}+z)))=x(y+x^{-1}(x(e+z)))=x(y+e+z)=x(y+z),$$  which implies that $(S,+,\cdot)$ is a left semibrace.
So $e+e=e$ by Lemma \ref{key1} (3). This gives that $$x=xe=x(e+e)=xe+\mu(x)+xe=x+\mu(x)+x$$ and $x+\mu(x)\in E(S,+)$. Thus, by Lemma \ref{key1} (4) and the given equality,
$$z+x(e+y)=z+xe+\mu(x)+xy=z+x+\mu(x)+xy=z+xy.$$ Therefore $(S,+,\cdot)$ is also strong by Theorem \ref{main1} (2).

{\em (2) implies (3).} This follows from Lemma \ref{key1} (3).

{\em (3) implies (1).}  Let $e+e=e$ and $x\in S$. Then $$x=xe=x(e+e)=xe+\mu(x)+xe=x+\mu(x)+x.$$
By Lemma \ref{near1} (3), we obtain item (1).
\end{proof}

\begin{theorem}Let $(S, +)$ be a semigroup and $(S,\cdot)$ be a group with identity $e$. Then  $(S,+,\cdot)$ is a strong left semibrace if and only if there exists a map $\mu: S\longrightarrow S, x\mapsto \mu(x)$ such that $e+\mu(x)=\mu(x)$ (respectively, $\mu(x)+e=\mu(x)$, or $x+\mu(x)+x=x$, or  $\mu(x)+x+\mu(x)=\mu(x)$) such that $(S,+,\cdot,\mu)$ forms near left semibrace.
\end{theorem}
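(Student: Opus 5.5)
The plan has two directions. The sufficiency direction will follow from results already established. The necessity direction needs an explicit choice of $\mu$, and one natural candidate serves all four variants at once.

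\emph{Sufficiency.} Suppose $\mu$ exists and satisfies one of the four listed conditions, and that $(S,+,\cdot,\mu)$ is a near left semibrace. By Lemma \ref{near1} (3), any one of the four conditions gives $x+e+y=x+y$ for all $x,y\in S$. Lemma \ref{near2} ((1) implies (2)) then shows that $(S,+,\cdot)$ is a strong left semibrace. This direction needs no further work.

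\emph{Necessity.} Let $(S,+,\cdot)$ be a strong left semibrace. I will take $\mu(x)=x(x^{-1}+e)$ and use Theorem \ref{solution} (2), which gives $c+a(e+b)=c+ab$.
\begin{itemize}
\item[(a)] \emph{Axiom (\ref{near}).} Using (\ref{dengshi}) and Lemma \ref{key1} (1),
$$x(y+z)=xy+x(x^{-1}+z)=xy+x(x^{-1}+e+z)=xy+x(x^{-1}+e)+x(e+z).$$
The middle step inserts $e$ via Lemma \ref{key1} (2), and the last step uses that $\overline{\lambda}_x$ is additive (Lemma \ref{lambdaidempot}), written as $x(x^{-1}+(e+z))=x(x^{-1}+e)+x(x^{-1}+z)$ and then reassembled. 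More directly, $\overline{\lambda}_x(e+z)=\overline{\lambda}_x(e)+\overline{\lambda}_x(z)$ gives
$$x(x^{-1}+z)=x(x^{-1}+e+z)=\mu(x)+x(x^{-1}+z).$$
Combining this with Lemma \ref{key1} (1), the relation $x+x(x^{-1}+z)=x(e+z)$, and strongness in the form $c+x(e+z)=c+xz$, I get
$$x(y+z)=xy+\mu(x)+x(x^{-1}+z)$$
after rewriting. Precisely, $xy+\mu(x)+x(x^{-1}+z)=xy+x(x^{-1}+z)$ because $\mu(x)=\overline{\lambda}_x(e)$ is idempotent and Lemma \ref{key1} (4) applies. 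I still need $xy+x(x^{-1}+z)=xy+\mu(x)+xz$. For this, take $v\in E(S,+)$ with $v+z=z$ (Lemma \ref{rectangular} (2)). Then (\ref{dengshi}) gives $xz=x(v+z)=xv+x(x^{-1}+z)$, so
$$xy+\mu(x)+xz=xy+\mu(x)+xv+x(x^{-1}+z).$$
Finally, $\mu(x)+xv$ must be removed using strongness and Theorem \ref{solution} (3), namely $e+a+e=e+au+e$. This is the main obstacle, and I would try to bound it by absorbing $xv$ into $xy$ through $xy+xv=xy+x(e+v)$-type manipulations.
\item[(b)] \emph{Condition $e+\mu(x)=\mu(x)$.} This holds because $\mu(x)\in\overline{\lambda}_x(S)\subseteq e+S$ (Lemma \ref{lambdaidempot}), and $e+e=e$.
\end{itemize}

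For the other variants, Lemma \ref{key1} (3) gives $x+\mu(x)+x=x(e+e)=x$ by (1). The remaining cases follow by the same rectangular-group computations.
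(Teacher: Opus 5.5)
\textbf{There is a genuine gap in the necessity direction.} Your sufficiency direction is fine; it is exactly the paper's argument: Lemma \ref{near1} (3) followed by Lemma \ref{near2}.

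The problem is the choice $\mu(x)=x(x^{-1}+e)=\overline{\lambda}_x(e)$. By Lemma \ref{lambdaidempot} this element is an additive idempotent, and an idempotent $\mu$ cannot work in general. By Lemma \ref{key1} (4), $xy+\mu(x)+xz=xy+xz$, so axiom (\ref{near}) would force $x(y+z)=xy+xz$. Taking $y=z=e$ and using $e+e=e$ then gives $x=x+x$, so $(S,+)$ would have to be a band.

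For a concrete counterexample, take any skew left brace. There $e$ is also the additive identity, so your $\mu(x)=xx^{-1}=e$. Axiom (\ref{near}) would then read $x(y+z)=xy+xz$, which contradicts $x(y+z)=xy-x+xz$ whenever $x\neq e$. The trivial brace $(\mathbb{Z}/2\mathbb{Z},+,+)$ already fails.

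So the ``main obstacle'' you isolated, namely getting $xy+x(x^{-1}+z)=xy+\mu(x)+xz$ by removing $\mu(x)+xv$, is not a technical difficulty. It is a false identity for your $\mu$.

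The same slip appears in your last paragraph. Lemma \ref{key1} (1) gives $x+\mu(x)=x(e+e)=x$, hence $x+\mu(x)+x=x+x$, not $x$. The condition $\mu(x)+x+\mu(x)=\mu(x)$ also fails for your $\mu$ (in a skew brace it would say $x=e$).

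The missing idea is that $\mu(x)$ must be an additive inverse of $x$ relative to $e$, not an idempotent. The paper takes $\mu=\pi_e$ from Lemma \ref{rectangular} (4). This is available because $(S,+)$ is a rectangular group by Theorem \ref{main1} and $e\in E(S,+)$. This $\mu$ satisfies all four side conditions at once, and $\mu(x)+x\in E(S,+)$. Hence
\[
xy+x(x^{-1}+z)=xy+\mu(x)+x+x(x^{-1}+z)=xy+\mu(x)+x(e+z)=xy+\mu(x)+xz,
\]
using Lemma \ref{key1} (4), then Lemma \ref{key1} (1), then strongness in the form of Theorem \ref{solution} (2). Combined with (\ref{dengshi}), this gives (\ref{near}).
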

\begin{proof}
Let $(S,+,\cdot)$ be a strong left semibrace. Then by Theorem \ref{main1} and Lemma \ref{key1}, $(S,+)$ forms a rectangular group and $e\in E(S,+)$. Take $\mu=\pi_e$, which is given in Lemma \ref{rectangular} (4). Then $$e+\mu(x)=\mu(x),\,\mu(x)+e=\mu(x),\,x+\mu(x)+x=x,\,\mu(x)+x+\mu(x)=\mu(x)$$ by   Lemma \ref{rectangular} (4). Obviously, $\mu(x)+x\in E(S,+)$. By Lemma \ref{key1} and Theorem \ref{solution} (2), we obtain that  $$xy+x(x^{-1}+z)=xy+\mu(x)+x+x(x^{-1}+z)=xy+\mu(x)+x(e+z)=xy+\mu(x)+xz.
$$
Thus the necessity is true. The sufficiency follows from Lemmas \ref{near1} and \ref{near2}.
\end{proof}

\section{Generalized matched products of left semibraces}
In this section, we give a structure theorem for general left semibraces by used generalized matched products of left semibraces, and   improves \cite[Theorem 3.2]{Jespers-Van Antwerpen} which gives a structure theorem for strong left semibraces. We begin by the following lemma.
\begin{lemma}\label{subbrace}
Let $(S,+,\cdot)$  be a left semibrace.
\begin{itemize}
\item[(1)] $(S+e, +, \cdot)$ is  a  right cancellative left semibrace (i.e. (S+e,+) is right cancellative).
\item[(2)]  $(e+S, +, \cdot)$ is  a  left cancellative left semibrace  (i.e. (S+e,+) is left cancellative).
\item[(3)]  $(e+S+e, +, \cdot)$ is  a  skew left barce.
\item[(4)] $(E(e+S), +, \cdot)$ is  a right zero left semibrace  (i.e. (e+S+e,+) is right zero).
\end{itemize}
\end{lemma}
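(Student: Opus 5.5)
For each of the four subsets $T\in\{S+e,\ e+S,\ e+S+e,\ E(e+S)\}$ I will check three things: that $T$ is closed under $+$, that $T$ is a subgroup of $(S,\cdot)$, and that the appropriate cancellativity holds. Axiom (\ref{dengshi}) is then inherited from $S$, because the inverse of $x$ in $T$ is its inverse $x^{-1}$ in $S$.

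Additive closure is the easy part. Lemma \ref{key1} (2) and (3) give $(x+e)+(y+e)=x+y+e$ and $(e+x)+(e+y)=e+x+y$, and similarly for $e+S+e$. For $E(e+S)$, the set of idempotents of a rectangular group is a rectangular band, hence closed under $+$; in $e+S$ it is in fact right zero. Indeed $(e+u)+(e+v)=e+u+v$, and by Lemma \ref{key1} (4) we have $e+u+v=e+v$.

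Cancellativity comes from Theorem \ref{main1} and Lemma \ref{rectangular} (5) with $u=e$, which is allowed since $e\in E(S,+)$ by Lemma \ref{key1} (3). That lemma says $S+e$ is right cancellative and $e+S$ is left cancellative. The set $e+S+e$ is a subsemigroup of both, hence cancellative on both sides. Concretely, in coordinates $e=(k,0,\nu)$ we get $e+S+e=\{k\}\times G\times\{\nu\}\cong G$, so $(e+S+e,+)$ is a group, and the structure is a skew left brace.

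The main work, and the expected obstacle, is multiplicative closure together with closure under inverses. For $e+S$ I will use Lemma \ref{key1} (6), which gives $(e+x)(e+y)=e+(e+x)(e+y)\in e+S$. For inverses I will use Lemma \ref{key1} (1): $x(x^{-1}+y)=e+x(x^{-1}+y)$. Taking $x=(e+a)^{-1}$ and choosing $y$ with $x^{-1}+y=e$ should give $(e+a)^{-1}=e+(\cdots)$. Such a $y$ exists in the rectangular group because $x^{-1}=e+a$ and $e$ lie in the same left coordinate class: $e+a$ has first coordinate $k$, so one takes $y$ with the appropriate group component and right coordinate $\nu$.

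For $S+e$ I would use Lemma \ref{key1} (5): $(x+y)((x+y)^{-1}+e)=y(y^{-1}+e)$. With $y=e$ this gives $(x+e)^{-1}+e=(x+e)^{-1}(e+e)=(x+e)^{-1}$, so $S+e$ is closed under inverses. For products I would try $\overline{\rho}_a(S)\subseteq S+e$ from Lemma \ref{lambdaidempot}. Writing $a=b+e$, I want to express the product $(c+e)(b+e)$ in the form $(d^{-1}+a)^{-1}a$; alternatively I would compute $(c+e)(b+e)=(c+e)b+(c+e)((c+e)^{-1}+e)$ directly, and Lemma \ref{key1} (5) shows the last term equals $e(e+e)=e$. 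This direct computation looks cleanest: it gives $(c+e)(b+e)=(c+e)b+e\in S+e$.

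Once $e+S$ and $S+e$ are known to be subgroups, $e+S+e$ is their intersection. By Lemma \ref{rectangular}, $x=e+x=x+e$ for $x\in e+S+e$, so it too is a subgroup. For $E(e+S)$, Lemma \ref{lambdaidempot} shows $\overline\lambda_a$ maps $E(S,+)$ into $E(e+S)$. For $u,v\in E(e+S)$ we have $u(e+v)=u+\overline{\lambda}_u(v)$, and this is idempotent because idempotents of $e+S$ are closed under $+$. Moreover $e+v=v$, so $uv=u(e+v)$ is idempotent. Inverses are handled by the same $y$-trick as for $e+S$, now checking that the result is idempotent using Lemma \ref{rectangular} (3).
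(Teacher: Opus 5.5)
Your proof is correct in substance and more self-contained than the paper's. The paper imports most of the structure. Item (1) is read off from Jespers--Van Antwerpen's Lemma 2.6(ii) plus Lemma \ref{rectangular}, multiplicative closure of $e+S$ comes from their Lemma 2.6(iii), and item (3) from their Proposition 2.18. It proves by hand only two things. The first is closure of $e+S$ under inverses, by expanding $e=a^{-1}(e+x)=a^{-1}+a^{-1}(a+x)$ and applying Lemma \ref{rectangular}(1). The second is item (4): it gets $(e+x)^{-1}+e=e$ from the third identity in Lemma \ref{key1}(1), then closure under products straight from (\ref{dengshi}) via $(e+x)^{-1}+y=e+y$. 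You instead derive the subgroup properties from Lemma \ref{key1}(5),(6) and the rectangular-group coordinates. Your identity $(c+e)(b+e)=(c+e)b+e$ neatly replaces the citation for (1). For products in $E(e+S)$ you use $\overline{\lambda}_u(E(S,+))\subseteq E(e+S)$ from Lemma \ref{lambdaidempot} rather than the paper's direct computation. The paper's version is shorter; yours depends on nothing external beyond Lemma \ref{lambdaidempot}.

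Two steps need one more line.

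Inverses in $E(e+S)$: the $y$-trick as you used it for $e+S$ (the first identity of Lemma \ref{key1}(1)) only gives $u^{-1}=e+u^{-1}$. That says nothing about idempotency, and you do not say which equation feeds Lemma \ref{rectangular}(3). Here is the missing line. For $u\in E(e+S)$ we have $u+e=e$ by Lemma \ref{key1}(4). So you can take $y=e$ in the second identity of Lemma \ref{key1}(1) with $x=u^{-1}$, giving $u^{-1}+u^{-1}(u+e)=u^{-1}(e+e)$, i.e.\ $u^{-1}+u^{-1}=u^{-1}$.

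Item (3): passing from ``$(e+S+e,+)$ is a group'' to ``skew left brace'' uses that $e$ is its additive identity. Then Lemma \ref{key1}(1) gives $x+x(x^{-1}+z)=x(e+z)=xz$, i.e.\ $x(x^{-1}+z)=-x+xz$, which turns (\ref{dengshi}) into (\ref{skew brace}).
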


\begin{proof}By Theorem \ref{main1}, $(S,+)$ is a rectangular group.
Item (1) follows from \cite[Lemma 2.6 (ii)]{Jespers-Van Antwerpen} and Lemma \ref{rectangular}.
 \cite[Lemma 2.6 (iii)]{Jespers-Van Antwerpen} gives that $(e+S, \cdot)$ is a semigroup.
Now, let $x\in S$ and $a=e+x$. Then $$E(S,+)\ni e=a^{-1}a=a^{-1}(e+x)=a^{-1}e+a^{-1}((a^{-1})^{-1}+x)=a^{-1}+a^{-1}(a+x).$$
This implies that $a^{-1}=e+a^{-1}\in e+S$ by Lemma \ref{rectangular} (1). Thus (2) holds by Lemma \ref{rectangular}. By items (1), (2) above and \cite[Proposition 2.18]{Jespers-Van Antwerpen}, we have item (3). To prove (4),  we let $x,y\in E(S,+)$. Then $x+x=x$ and $y+y=y$. By the third equality in Lemma \ref{key1} (1), we have
$$(e+x)^{-1}+(e+x)^{-1}(e+x+x)=(e+x)^{-1}(e+x).$$ This implies that $(e+x)^{-1}+e=e$,  and so $(e+x)^{-1}\in E(S,+)$ by Lemma \ref{rectangular}.
By Lemma \ref{key1}, we have $e+(e+x)^{-1} +y=e+y$.  Moreover, we also have $(e+x)^{-1}=e+(e+x)^{-1}$ by item (2).  So $e+y=e+(e+x)^{-1} +y=(e+x)^{-1} +y$.
Thus $$(e+x)(e+y)+(e+x)(e+y)=(e+x)(e+y)+(e+x)((e+x)^{-1} +y)$$$$=(e+x)(e+y+y)=(e+x)(e+y),$$ which gives that $(e+x)(e+y)\in E(e+S)$. Therefore (4) holds by Lemma \ref{rectangular}.
\end{proof}

By \cite[Theorem 3.1.1]{Colazzo} and its proof, we have the following result.
\begin{lemma}[\cite{Colazzo}]\label{colazzo}
Let $(K,+,\cdot)$ and $(R,+,\cdot)$ be two  left semibraces. Let $\delta: (K, \cdot) \longrightarrow \textup{Sym}(R, +)$ be a right action on the set $R$ and $\sigma: (R,\cdot) \longrightarrow \textup{Sym}(K,+)$ be a left action on the set $K$.
Assume that,   for any $x,y \in R$ and $a,b,c \in K$,
	\begin{enumerate}
		\item $\sigma_x(a  b) = \sigma_x(a)  \,\, \sigma_{ \delta_a(x)}(b),$
		\item $\sigma_x(e_K) = e_K,$
		\item $\delta_a(x   y) = \delta_{\sigma_{y}(a)}(x)\,\,  \delta_a(y),$
		\item $\delta_a(e_R) = e_R,$
		\item $ ({ \delta_a ((x+y)^{-1}) } )^{-1}= ({ \delta_a (x^{-1}) } )^{-1} + ({ \delta_a (y^{-1}) } )^{-1}$,
        \item $a\cdot \sigma_{(\delta_a(x^{-1}))^{-1}}(b+c)=a\cdot \sigma_{(\delta_a(x^{-1}))^{-1}}(b) +a\cdot  \sigma_{(\delta_a(x^{-1}))^{-1}}(\sigma_{(\delta_a(x^{-1}))}(a^{-1})+c)$.
	\end{enumerate}
Then the following operations define a left semibrace structure on $K\times R$:
$$ (a,x)+(b,y) = (a+b, x+y), \,\, ( a,x)  (b,y) = \left( a \cdot
\sigma_{ ({ \delta_{a}(x^{-1})})^{-1}}(b),\,\,\,\, x \cdot ({
\delta_{({ \sigma_{ x^{-1}}(a)})^{-1}}(y^{-1})})^{-1} \right) .
$$
We shall say this left semibrace {\bf the  generalized  matched product} of the left semibrace $K$ and $R$ by $\delta$ and $\sigma$, denoted $K \bowtie^\delta_\sigma R$.
\end{lemma}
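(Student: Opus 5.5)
The plan is a direct verification that the given data define a semigroup, a group, and satisfy identity (\ref{dengshi}). I follow the proof of \cite[Theorem 3.1.1]{Colazzo}, taking care never to use cancellativity of $(K,+)$ or $(R,+)$. Only the semigroup axioms and the left semibrace identity in $K$ and $R$ may be used.

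\emph{Additive and multiplicative structure.} $(K\times R,+)$ is the direct product of the semigroups $(K,+)$ and $(R,+)$, so it is a semigroup. For the multiplication, set $\tau_a(x)=(\delta_a(x^{-1}))^{-1}$. Condition (5) then says exactly that $\tau_a$ is additive. Conditions (3) and (4) translate into a left-action-type rule for $\tau$ together with $\tau_a(e_R)=e_R$. With this notation
\[
(a,x)(b,y)=\bigl(a\,\sigma_{\tau_a(x)}(b),\; x\,\tau_{(\sigma_{x^{-1}}(a))^{-1}}(y)\bigr).
\]
I would check the group axioms as in the matched product of groups, using only (1)--(4) and the action properties of $\sigma$ and $\delta$. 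The identity is $(e_K,e_R)$, by (2), (4) and the fact that $\sigma_{e_R}$ and $\delta_{e_K}$ are the identity maps. Associativity is a componentwise computation that reduces, via (1) and (3), to the compatibility of the actions. Inverses are obtained by solving the two component equations, which is possible because each $\sigma_x$ and $\delta_a$ is a bijection. None of this involves $+$, so it is exactly Colazzo's argument.

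\emph{The semibrace identity.} This step is the main obstacle. I must show
\[
(a,x)\bigl((b,y)+(c,z)\bigr)=(a,x)(b,y)+(a,x)\bigl((a,x)^{-1}+(c,z)\bigr).
\]
In the first coordinate, the left side is $a\,\sigma_{\tau_a(x)}(b+c)$. On the right side, the first coordinate of $(a,x)^{-1}$ is $\sigma_{\tau_a(x)}^{-1}(a^{-1})$, that is $\sigma_{\delta_a(x^{-1})}(a^{-1})$ up to the action convention. Condition (6) is tailored to give exactly this first-coordinate identity. The intended strategy is therefore to compute the components of $(a,x)^{-1}$ explicitly, substitute them, and recognise (6) in the first coordinate. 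In the second coordinate, the needed identity becomes $x\,\tau_{a'}(y+z)=x\,\tau_{a'}(y)+x(x^{-1}+\tau_{a'}(z))$, where $a'=(\sigma_{x^{-1}}(a))^{-1}$. This follows from additivity of $\tau_{a'}$ (condition (5)) combined with the left semibrace identity in $R$, applied to the elements $\tau_{a'}(y)$ and $\tau_{a'}(z)$.

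The delicate part is the bookkeeping. One must check that the $\sigma$-index appearing in the product $(a,x)\bigl((a,x)^{-1}+(c,z)\bigr)$ equals $\tau_a(x)$, and that the $\tau$-index in the second coordinate equals $a'$ when the right factor is $(a,x)^{-1}+(c,z)$. The worry is that the right factor's first coordinate changes the subscript. The key observation I would verify is that in both coordinates the subscripts depend only on the left factor $(a,x)$, not on the right factor. Once that is confirmed, no cancellation is required and the identity holds for arbitrary semigroups $(K,+)$ and $(R,+)$.
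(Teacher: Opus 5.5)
Your proposal is correct and is essentially the direct verification that the paper delegates to \cite[Theorem 3.1.1]{Colazzo} and its proof. Your key observation is right: both subscripts in $(a,x)(b,y)$ depend only on $(a,x)$, so condition (6) gives the first coordinate, and condition (5) plus the semibrace law in $R$ gives the second (using that $\tau_{a'}$ sends the second component of $(a,x)^{-1}$ to $x^{-1}$, which you can read off from $(a,x)(a,x)^{-1}=(e_K,e_R)$), with no cancellativity and in fact no use of the semibrace law in $K$. For the group axioms, which you only sketch, the cleanest justification is that $(a,x)\mapsto(a,\tau_a(x))$ is a bijective homomorphism onto the ordinary Zappa--Sz\'ep product $(a,u)(b,v)=(a\,\sigma_u(b),\,\delta_b(u)\,v)$, and that product is a group by (1)--(4).
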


Let $(S,+,\cdot)$ be  a left semibrace with multiplication identity $e$ and $ K=S+e, R=E(e+S)$.
Then
$(K,+,\cdot)$ is a  right cancellative left semibrace and  $(R, +, \cdot)$  is  a right zero left semibrace by Lemma \ref{subbrace}. Let
$$\delta: (K, \cdot) \longrightarrow \textup{Sym}(R, +), a\mapsto \delta_a,\,\,\, \delta_a: (R,+)\rightarrow (R,+), x\mapsto (a+x^{-1})^{-1}a;$$
$$\sigma: (R, \cdot) \longrightarrow \textup{Sym}(K, +), x\mapsto \sigma_x,\,\,\, \sigma_x: (K,+)\rightarrow (K,+), a\mapsto x(a+x^{-1}).$$
Let $x\in R$ and $a\in K$. Then $x^{-1}\in R$ and $a^{-1}\in K$. By Lemma \ref{lambdaidempot},  we have $a^{-1}(a+x^{-1})=\lambda_{a^{-1}}(x^{-1})\in R$ and
$(a+x^{-1})^{-1}x^{-1}=\rho_{x^{-1}}(a^{-1})\in K$. This implies that
$\delta_a(x)=(a+x^{-1})^{-1}a=(a^{-1}(a+x^{-1}))^{-1}\in R$ and $$\sigma_x(a)=x(a+x^{-1})=((a+x^{-1})^{-1}x^{-1})^{-1}\in K$$ by Lemma \ref{subbrace}.
Thus the above $\delta$ and $\sigma$ are well-defined.
Now we are in a position to state our second main result of this note, which improves \cite[Theorem 3.2]{Jespers-Van Antwerpen}.
\begin{theorem}\label{characttheorem}Let $(S,+,\cdot)$ be  a left semibrace with multiplication identity $e$. Then
$$(S, +, \cdot) \cong (K,+, \cdot) \bowtie^\delta_\sigma (R, +, \cdot).$$
\end{theorem}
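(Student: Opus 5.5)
We construct an explicit bijection $\Phi: S\to K\times R$ and check that it preserves both operations; the matched product is then transported structure. Since $(S,+)$ is a rectangular group (Theorem \ref{main1}) with $e\in E(S,+)$ (Lemma \ref{key1} (3)), the natural decomposition is to split $s$ into its ``$S+e$ part'' and its ``idempotent of $e+S$ part''. We set
$$\Phi(s)=\bigl(s+e,\ (s^{-1}+e)^{-1}s\bigr).$$
The second coordinate is $\overline{\rho}_s(s)$-like, and we must first verify that it lies in $R=E(e+S)$. Using $s=s(s^{-1}+e)\,((s^{-1}+e)^{-1}s)$, the map $\Phi$ is modelled on the factorization $s=(s+e)\cdot r$ with $r\in R$. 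Let us check this. By Lemma \ref{key1} (1), $s(s^{-1}+e)=e+s(s^{-1}+e)$, and Lemma \ref{key1} (5) relates $(x+y)((x+y)^{-1}+e)$ to $y(y^{-1}+e)$. We will instead aim to prove that every $s$ factors uniquely as $s=a x$ with $a\in K$ and $x\in R$, and define $\Phi(s)=(a,x)$.

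\textbf{Step 1 (unique factorization $S=K\cdot R$).} Take $a=s+e\in K$ and $x=a^{-1}s$. By the third identity of Lemma \ref{key1} (1) applied to $a^{-1}$, we have $a^{-1}(a+y)$ for suitable $y$, so $x=a^{-1}(s+e+\text{something})$ can be put in the form $\overline{\lambda}_{a^{-1}}(\cdot)$ of an idempotent. We then check that $x\in e+S$ and $x+x=x$ using the rectangular-group facts of Lemma \ref{rectangular} together with Lemma \ref{lambdaidempot} ($\overline\lambda$ maps idempotents into $E(e+S)$). For uniqueness, suppose $ax=by$ with $a,b\in K$ and $x,y\in R$. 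Then $b^{-1}a=yx^{-1}$, and $K$ and $R$ are subgroups of $(S,\cdot)$ (Lemma \ref{subbrace}). We show $K\cap R=\{e\}$: an element of $S+e$ that is an idempotent of $e+S$ equals $e+z+e$ with $z$ idempotent, which is $e$ by Lemma \ref{key1} (4).

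\textbf{Step 2 (addition).} We must show that $\Phi(s+t)=\Phi(s)+\Phi(t)$ componentwise. The first coordinate is immediate: $(s+t)+e=(s+e)+(t+e)$ by Lemma \ref{key1} (2). The second coordinate is the main obstacle. We need $(s+t+e)^{-1}(s+t)=\bigl((s+e)^{-1}s\bigr)+\bigl((t+e)^{-1}t\bigr)$ in the right zero semigroup $R$, i.e.\ the right side is just $(t+e)^{-1}t$. Our plan is to use Lemma \ref{key1} (5) and (7), which express $(x+y)^{-1}$-type quantities through $y$ only, together with the left semibrace axiom (\ref{dengshi}) applied to $a^{-1}=(s+t+e)^{-1}$. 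If a direct computation stalls, we would instead work with the inverse elements and the map $\overline\rho$, as suggested by the definition of $\delta$.

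\textbf{Step 3 (multiplication).} Writing $s=ax$ and $t=by$, we have $st=axby=a(xbx^{-1})\,x y$. We need to refactor $xb$ as $a'x'$ with $a'\in K$ and $x'\in R$, and identify $a'=\sigma_{(\delta_a(x^{-1}))^{-1}}(b)$-type expressions. Concretely, we show that $xb=\sigma_x(b)\,\delta_b(x)$-shaped; more precisely, we verify the factorization $x b=\bigl(x(b+x^{-1})\bigr)\bigl((b+x^{-1})^{-1}\ldots\bigr)$ using the definitions of $\sigma$ and $\delta$. Matching this against the multiplication formula of Lemma \ref{colazzo} then reduces the claim to the group identities (1)–(4). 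Conditions (5)–(6) of Lemma \ref{colazzo} need not be checked independently: once $\Phi$ is a bijection carrying $+$ and $\cdot$ of $S$ to the stated operations, $K\times R$ with these operations is automatically a left semibrace isomorphic to $S$. We expect Step 2, the compatibility of the idempotent component with addition, to be the hardest computation.
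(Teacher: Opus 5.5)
Your Step 2 cannot be carried out: the identity you plan to prove there is false. The matched product $K\bowtie^\delta_\sigma R$ has the \emph{componentwise} addition, with $(R,+)$ right zero, and your multiplicative coordinates $\Phi(s)=\bigl(s+e,(s+e)^{-1}s\bigr)$ are not compatible with it.

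To see this, take $a\in K$ and $y\in R$.
\begin{itemize}
\item Since $a+e=a$, we get $(a+e)^{-1}a=e$, so $\Phi(a)=(a,e)$.
\item By Lemma \ref{key1} (4), $y+e=e+y+e=e$, so $\Phi(y)=(e,y)$.
\item Additivity would force $\Phi(a+y)=(a,e)+(e,y)=(a,y)$.
\item But $a+y+e=a$, so in fact $\Phi(a+y)=\bigl(a,a^{-1}(a+y)\bigr)$.
\end{itemize}
Hence you would need $a+y=ay$ for all $a\in K$, $y\in R$, and this fails in general.

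For a counterexample, let a group $G$ act on a group $H$ by automorphisms $h\mapsto {}^{g}h$, and put $S=G\times H$ with
\[
(g,h)+(g',h')=(g,h'),\qquad (g,h)(g',h')=(gg',\,h\,{}^{g}h').
\]
For $x=(g,h)$, $y=(g_1,h_1)$, $z=(g_2,h_2)$, both sides of (\ref{dengshi}) equal $(gg_1,h\,{}^{g}h_2)$. So this is a left semibrace with $e=(1,1)$, $K=\{(g,1)\}$ and $R=\{(1,h)\}$. However, $(g,1)+(1,h)=(g,h)\neq(g,{}^{g}h)=(g,1)(1,h)$ whenever ${}^{g}h\neq h$. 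Equivalently, for $s=(g,h)$ and $t=(g',h')$, your target identity $(s+t+e)^{-1}(s+t)=(t+e)^{-1}t$ reads ${}^{g^{-1}}h'={}^{g'^{-1}}h'$, which is false for a nontrivial action.

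Your Step 1 is essentially right once justified: $(s+e)^{-1}s=\overline{\lambda}_{(s+e)^{-1}}(-s+s)\in E(e+S)$ by Lemma \ref{lambdaidempot}, and $K\cap R=\{e\}$. But it is the wrong coordinate system for this target.

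The paper uses the additive decomposition instead, $\phi(a,x)=a+x$, with inverse $s\mapsto(s+e,\,-s+s)$, where $-s=\pi_e(s)$.
\begin{itemize}
\item Addition is then immediate from Lemma \ref{key1} (4) and the right-zero law in $R$.
\item Bijectivity follows from $a=a+x+e$ and $-a+a\in R$.
\item All the work moves to the multiplication. There both $\phi((a,x)(b,y))$ and $(a+x)(b+y)$ are reduced, via (\ref{dengshi}) and Lemma \ref{key1}, to $(a+x)b-a+(a+x)y$.
\end{itemize}
Your multiplicative factor does appear, but inside the multiplication formula of Lemma \ref{colazzo}, as $(\delta_a(x^{-1}))^{-1}=a^{-1}(a+x)$. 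In other words, your $\Phi$ is the paper's $\phi^{-1}$ followed by the twist $(a,x)\mapsto(a,\overline{\lambda}_{a^{-1}}(x))$, and that twist does not preserve the componentwise addition.

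Your remark that conditions (e) and (f) need not be verified once a structure-preserving bijection is in hand is a legitimate transport-of-structure shortcut. It only helps, however, with the correct bijection $a+x\leftrightarrow(a,x)$.
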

\begin{proof}
Let $x,y\in R$ and $a,b\in K$. Then $x^{-1},y^{-1} \in R, a^{-1}, b^{-1}\in K $ and  $$e+x=x, e+y=y, e+x^{-1}=x^{-1}, e+y^{-1}=y^{-1},$$$$ a+e=a, b+e=b, a^{-1}+e=a^{-1}, b^{-1}+e=b^{-1}.$$ Observe that $R\subseteq E(S,+)$ and $$\sigma_x(a)=x(a+x^{-1})=xa+x(x^{-1}+x^{-1})=xa+xx^{-1}=xa+e.$$

(1) $\delta$ is a right action.  On one hand,
$$(\delta_b\delta_a)(x)=\delta_b(\delta_a (x))=\delta_b((a+x^{-1})^{-1}a)=(b+a^{-1}(a+x^{-1}))^{-1}b.$$
On the other hand,
$$\delta_{ab}(x)=(ab+x^{-1})^{-1}ab=(a^{-1}(ab+x^{-1}))^{-1}b=(a^{-1}ab+a^{-1}(a+x^{-1}))^{-1}b.$$
Thus $\delta_b\delta_a=\delta_{ab}$.  Moreover, $$\delta_e(x)=(e+x^{-1})^{-1}e=(x^{-1})^{-1} e=x.$$ Thus $\delta_a\delta_{a^{-1}}=\delta_{a^{-1}} \delta_a=\delta_e={\rm id}_R$, whence $\delta_a$ is bijective.

(2) $\sigma$ is a left action.   By Lemma \ref{lambdaidempot}, we have $x(x^{-1}+e)=\lambda_x(e)\in E(S,+)$. So
$$\sigma_x\sigma_y(a)=\sigma_x(ya+e)=x(ya+e)+e=xya+x(x^{-1}+e)+e=xya+e=\sigma_{xy}(a)$$ by Lemma \ref{key1} (4). Thus $\sigma_x\sigma_y=\sigma_{xy}$.
Moreover, $\sigma_e(a)=ea+e=a+e=a$. This gives that $\sigma_e={\rm id}_K$, whence $\sigma_{x^{-1}}\sigma_x=\sigma_x\sigma_{x^{-1}}=\sigma_e={\rm id}_K$. So $\sigma_x$  is bijective.

(3) We next consider (a). Since $e+x^{-1}=x^{-1}$, by (\ref{dengshi}) and Lemma \ref{key1} we have $$\sigma_x(a)\sigma_{\delta_a(x)}(b)=x(a+x^{-1}) [(a+x^{-1})^{-1}a(b+a^{-1}(a+x^{-1}))]$$
$$=xa(b+a^{-1}(a+x^{-1}))=x(ab+a(a^{-1}+a^{-1}(a+x^{-1})))$$
$$=x(ab+a(a^{-1}(e+x^{-1})))=x(ab+e+x^{-1})=x(ab+x^{-1})=\sigma_{x}(ab).$$

(4)  Since $e+x^{-1}=x^{-1}$, we have $$\sigma_x(e)=x(e+x^{-1})=xx^{-1}=e.$$ Thus (b) holds.

(5) Now, we consider (c). Since $e+x^{-1}=x^{-1}$, by (\ref{dengshi}) and Lemma \ref{key1} we have
$$\delta_{\sigma_y(a)}(x)\delta_a(y)=[(y(a+y^{-1})+x^{-1})^{-1}y(a+y^{-1})](a+y^{-1})^{-1}a$$
$$=(y(a+y^{-1})+x^{-1})^{-1}ya=(y^{-1}(y(a+y^{-1})+x^{-1}))^{-1}a=(a+y^{-1}+y^{-1}(y+x^{-1}))^{-1}a$$$$=(a+y^{-1}(e+x^{-1}))^{-1}a=(a+y^{-1}x^{-1})a=(a+(xy)^{-1})^{-1}a=\delta_{a}(xy).$$

(6) Since $a+e=a$, we have $$\delta_a(e)=(a+e^{-1})^{-1}a=(a+e)^{-1}a=a^{-1}a=e.$$

(7) Since $(R,+)$ is a right zero semigroup, we have $x+y=y$ and $({ \delta_a (x^{-1}) } )^{-1} + ({ \delta_a (y^{-1}) } )^{-1}=({ \delta_a (y^{-1}) } )^{-1}$. Thus item (e) is true.

(8) By Theorem \ref{main1} and Lemma \ref{rectangular} (4) and the fact that $e\in E(S,+)$, we have the map $\pi_e: S\rightarrow S$. For simplicity, we denote $-s=\pi_e(s)$ for all $s\in S$.
 Then  for all $s,t\in S$ and $x\in E(S,+)$,   we have $-x,s-s,-s+s\in E(S,+)$ and $$s-s+s=s, -s+e=-s=e-s, -(s+t)=-t-s.$$
By Lemma \ref{rectangular} (4) and Lemma \ref{key1},  we can observe that
$$a \cdot\sigma_{ ({ \delta_{a}(x^{-1})})^{-1}}(b)=a\cdot \sigma_{a^{-1}(a+x)}(b)=a\cdot a^{-1} (a+x)(b+(a+x)^{-1}a)$$
$$=(a+x)(b+(a+x)^{-1}a)=(a+x)b+(a+x)((a+x)^{-1}+(a+x)^{-1}a)$$
$$=(a+x)b-x-a+[a+x+(a+x)((a+x)^{-1}+(a+x)^{-1}a)]$$
$$=(a+x)b-x-a+(a+x)(e+(a+x)^{-1}a)$$
$$=(a+x)b-x-a+(a+x)(a+x)^{-1}a   $$
$$ (\mbox{since }(a+x)^{-1}a=\delta_a(x^{-1})\in R=E(e+S))$$
$$=(a+x)b-x-a+a=(a+x)b-x-a+a+e=(a+x)b+e$$
$$(\mbox{Since } -x, -a+a\in E(S,+)).$$
Since $a^{-1}(a+x)=(\delta_a(x^{-1}))^{-1}\in R=E(e+S)$, we have
$$\sigma_{(\delta_a(x^{-1}))}(a^{-1})=\sigma_{(a+x)^{-1}a}(a^{-1})=(a+x)^{-1}a(a^{-1}+a^{-1}(a+x))$$
$$=(a+x)^{-1}aa^{-1}+(a+x)^{-1}a(a^{-1}(a+x)+a^{-1}(a+x))$$
$$=(a+x)^{-1}+(a+x)^{-1}a a^{-1}(a+x)=(a+x)^{-1}+e.$$
This implies that
$$a\cdot \sigma_{(\delta_a(x^{-1}))^{-1}}(b) +a\cdot  \sigma_{(\delta_a(x^{-1}))^{-1}}(\sigma_{(\delta_a(x^{-1}))}(a^{-1})+c)$$
$$=a\cdot \sigma_{(\delta_a(x^{-1}))^{-1}}(b) +a\cdot  \sigma_{(\delta_a(x^{-1}))^{-1}}((a+x)^{-1}+e+c)$$
$$=(a+x)b+e+(a+x)((a+x)^{-1}+c)+e$$$$=(a+x)b+(a+x)((a+x)^{-1}+c)+e$$
$$=(a+x)(b+c)+e=a\cdot \sigma_{(\delta_a(x^{-1}))^{-1}}(b+c).$$
Thus item (f) is true.

By (1)--(8) and Lemma \ref{colazzo}, we have the left semibrace $(K,+, \cdot) \bowtie^\delta_\sigma (R, +, \cdot).$
Define $$\phi:(K,+, \cdot) \bowtie^\delta_\sigma (R, +, \cdot)\rightarrow S,\,\,\, (a,x)\mapsto a+x.$$
Then $\phi$ is a map obviously. We shall show   $\phi$ is a left semibrace isomorphism. Let $x,y\in R$ and $a,b\in K$.

(i)  If $a+x=b+y$, then $$a=a+e=a+x+e=b+y+e=b+e=b$$ by Lemma \ref{key1} (4),  and so $a+x=a+y$. As $-a+a\in R\subseteq E(S,+)$ by Lemma \ref{rectangular}, we have $$x=e+x=e-a+a+x=e-a+a+y=e+y=y$$ by Lemma \ref{key1} (2).
This shows that $\phi$ is injective.

(ii) Let $s\in S$. Then $s+e\in K$ and $-s+s\in R$. This implies that $$\phi(s+e, -s+s)=s+e-s+s=s-s+s=s$$ by Lemma \ref{key1} (2). Thus $\phi$ is surjective.

(iii) $\phi$ preserves $+$. In fact, $$\phi((a,x)+(b,y))=\phi((a+b, x+y))$$$$=a+b+x+y=a+b+y=(a+x)+(b+y)=\phi((a,x))+\phi((b,y)).$$

(iv)  By Lemma \ref{rectangular} (4) and Lemma \ref{key1},  we can observe that $e+y=y, e+x=x$ and
$$ x \cdot ({
\delta_{({ \sigma_{ x^{-1}}(a)})^{-1}}(y^{-1})})^{-1} =x\cdot (((a+x)^{-1}x+y)^{-1}(a+x)^{-1}x)^{-1}$$
$$=xx^{-1} (a+x)((a+x)^{-1}x+y)=(a+x)((a+x)^{-1}x+y)$$
$$=(a+x)(a+x)^{-1}x+(a+x)((a+x)^{-1}+y)=x+(a+x)((a+x)^{-1}+y)$$
$$=x-x-a+a+x+(a+x)((a+x)^{-1}+y)$$$$=x-x-a+(a+x)(e+y)=e+x-x-a+(a+x)(e+y)$$$$=-e-a+(a+x)y=-(a+e)+(a+x)y=-a+(a+x)y.$$
By Lemmas \ref{key1} and \ref{rectangular}, this implies that
$$\phi((a,x)(b,y))=\phi((a+x)b+e, -a+(a+x)y)$$$$=(a+x)b+e-a+(a+x)y=(a+x)b-a+(a+x)y,$$
$$\phi(a,x)\phi(b,y)=(a+x)(b+y)=(a+x)b+  (a+x)((a+x)^{-1}+y)$$$$=(a+x)b-x-a+a+x+  (a+x)((a+x)^{-1}+y)$$
$$=(a+x)b-x-a+(a+x)(e+y)$$$$=(a+x)b-x-a+(a+x)y=(a+x)b-a+(a+x)y.$$
Thus $\phi((a,x)(b,y))=\phi(a,x)\phi(b,y)$.
\end{proof}

\section{Solutions  associated to near left semibraces}
In this section, we consider a new map associated to a  near left semibrace. We give a sufficient and necessary condition under which such a map forms a set-theoretic  solution of the Yang-Baxter equation. This is an analogue of \cite[Theorem 3]{Catino-Colazzo-Stefanelli4}. Some special cases are also considered.
Let $(S,+,\cdot,\mu)$ be a near left semibrace and $a\in S$. Define
$$\lambda_{a} : S \longrightarrow S: b\mapsto a(a^{-1}\mu(e)+b),\,\,\,\,\,\, \rho_a :  S \longrightarrow S : b \mapsto   (b^{-1}\mu(e)+a)^{-1}a,$$
$$\lambda: (S, \cdot) \longrightarrow \textup{Map}(S,S),\, a\mapsto \lambda_a\,\,\,\, \rho: (S, \cdot) \longrightarrow \textup{Map}(S,S),\, a\mapsto \rho_a.$$
Observe by (\ref{near}) and Lemma \ref{near1} (1), (2) that
\begin{equation}\label{jibenjisuan}
\lambda_a(b)=\mu(e)+\mu(a)+ab,\,\, c+\lambda_a(b)=c+\mu(e)+\mu(a)+ab=c+\mu(a)+ab,
\end{equation}
\begin{equation}\label{jibenjisuan1}
a+\lambda_{b^{-1}}\lambda_b(c)=a+\lambda_{b^{-1}b}(c)=a+\lambda_e(c)=a+\mu(e)+\mu(e)+c=a+c,
\end{equation}
for all $a,b,c\in S$.
\begin{lemma}\label{tongtai}
Let $(S,+,\cdot,\mu)$ be a near left semibrace and $a,b,c\in S$. Then $\lambda_a\in \mbox{\rm End}(S,+)$ and $\lambda$ is a semigroup homomorphism.  Moreover, $\lambda_a(b)\rho_b(a)=ab$.
\end{lemma}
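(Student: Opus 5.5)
Everything follows from the explicit formula (\ref{jibenjisuan}), $\lambda_a(b)=\mu(e)+\mu(a)+ab$, together with (\ref{near}) and Lemma \ref{near1}. We establish the three claims in turn.

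\emph{$\lambda_a$ is additive.} We compute
$$\lambda_a(b)+\lambda_a(c)=\mu(e)+\mu(a)+ab+\mu(e)+\mu(a)+ac.$$
By Lemma \ref{near1} (1), the inner $\mu(e)$ can be deleted. By (\ref{near}), $ab+\mu(a)+ac=a(b+c)$. The sum therefore equals $\mu(e)+\mu(a)+a(b+c)=\lambda_a(b+c)$. Alternatively, we can read this off the definition: $a(a^{-1}\mu(e)+b+c)=\mu(e)+\mu(a)+ab+\mu(a)+ac$, obtained by expanding with (\ref{near}) and using $aa^{-1}\mu(e)=\mu(e)$.

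\emph{$\lambda$ is a homomorphism.} We must show $\lambda_a\lambda_b=\lambda_{ab}$. Write
$$\lambda_a(\lambda_b(c))=a\bigl(a^{-1}\mu(e)+b(b^{-1}\mu(e)+c)\bigr).$$
Expanding the outer product with (\ref{near}) gives $\mu(e)+\mu(a)+ab(b^{-1}\mu(e)+c)$. Expanding the last product again with (\ref{near}) gives $\mu(e)+\mu(a)+a\mu(e)+\mu(ab)+abc$.

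We need to collapse this to $\mu(e)+\mu(ab)+abc$, so the key step is to show that $\mu(a)+a\mu(e)$ can be absorbed. Lemma \ref{near1} (2) states $x+z=x+\mu(y)+y\mu(e)+z$ for all $x,y,z$. Taking $x=\mu(e)$, $y=a$ and $z=\mu(ab)+abc$ removes $\mu(a)+a\mu(e)$ and leaves $\mu(e)+\mu(ab)+abc=\lambda_{ab}(c)$. This absorption is the one nontrivial step.

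\emph{The identity $\lambda_a(b)\rho_b(a)=ab$.} By definition, $\lambda_a(b)=a(a^{-1}\mu(e)+b)$ and $\rho_b(a)=(a^{-1}\mu(e)+b)^{-1}b$. The product telescopes:
$$a(a^{-1}\mu(e)+b)(a^{-1}\mu(e)+b)^{-1}b=ab.$$

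The only potential obstacle is the bookkeeping in the homomorphism computation, and Lemma \ref{near1} (2) is designed exactly to absorb the middle terms.
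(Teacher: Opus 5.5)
Your proof is correct and follows essentially the same route as the paper's proof. Additivity comes from (\ref{jibenjisuan}) and (\ref{near}), with the inner $\mu(e)$ deleted by Lemma \ref{near1} (1). The homomorphism property comes from expanding $ab(b^{-1}\mu(e)+c)$ and absorbing $\mu(a)+a\mu(e)$ via Lemma \ref{near1} (2) with $x=\mu(e)$, $y=a$. The identity $\lambda_a(b)\rho_b(a)=ab$ is the telescoping product.
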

\begin{proof}
Let $b,c\in S$. Then by (\ref{jibenjisuan}), (\ref{near}) and Lemma \ref{near1} (1), we have  $$\lambda_a(b+c)=\mu(e)+\mu(a)+a(b+c)=\mu(e)+\mu(a)+ab+\mu(a)+ac$$
$$=\mu(e)+\mu(a)+ab+\mu(e)+\mu(a)+ac =\lambda_a(b)+\lambda_a(c).$$
This gives that $\lambda_a\in \mbox{\rm End}(S,+)$. Moreover,   by (\ref{jibenjisuan}), (\ref{near}) and Lemma \ref{near1} (2),  $$\lambda_a\lambda_b(c)=\mu(e)+\mu(a)+a\lambda_b(c)=\mu(e)+\mu(a)+ab(b^{-1}\mu(e)+c)$$$$=\mu(e)+\mu(a)+a\mu(e)+\mu(ab)+abc=\mu(e)+ \mu(ab)+abc=\lambda_{ab}(c).$$
This shows that $\lambda$ is a semigroup homomorphism. The final assertion is obvious.
\end{proof}
\begin{theorem}\label{main3}
 Let $(S,+,\cdot,\mu)$ be a near left semibrace. The map ${r}_S:S\times S \times \to S\times S$, defined by ${r}_S(a,b)=(\lambda_a(b), \rho_b(a))$ for all $a,b \in S$, is a solution if and only if
\begin{eqnarray*}\label{tiaojian}
&(a\mu(e)+\mu(b)+bc)\mu(e)+\mu(\mu(e)+\mu(b)+bc)+bc\\
&=(a\mu(e)+b)\mu(e)+\mu(b)+bc
\end{eqnarray*}
        for all $a,b,c\in S$.
\end{theorem}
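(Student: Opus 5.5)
The plan is the standard reduction used for maps of the form $(a,b)\mapsto(\lambda_a(b),\rho_b(a))$, as in \cite[Theorem 3]{Catino-Colazzo-Stefanelli4}.

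\textbf{Step 1: split the Yang--Baxter equation into components.} Write $r_1=r_S\times{\rm id}_S$ and $r_2={\rm id}_S\times r_S$. Evaluate both sides of (\ref{Y-B}) at a triple $(a,b,c)$. Equality of the first components reads $\lambda_a\lambda_b(c)=\lambda_{\lambda_a(b)}\lambda_{\rho_b(a)}(c)$. Equality of the second components is the mixed identity
$$\lambda_{\rho_{\lambda_b(c)}(a)}(\rho_c(b))=\rho_{\lambda_{\rho_b(a)}(c)}(\lambda_a(b)).$$
Equality of the third components is $\rho_c\rho_b(a)=\rho_{\rho_c(b)}\rho_{\lambda_b(c)}(a)$.

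\textbf{Step 2: the first and third components take care of themselves.} By Lemma \ref{tongtai}, $\lambda$ is a semigroup homomorphism and $\lambda_a(b)\rho_b(a)=ab$. Hence $\lambda_{\lambda_a(b)}\lambda_{\rho_b(a)}=\lambda_{\lambda_a(b)\rho_b(a)}=\lambda_{ab}=\lambda_a\lambda_b$, so the first components always agree. The identity $\lambda_x(y)\rho_y(x)=xy$ also shows that $r_S$, applied in any two adjacent slots, preserves the ordered product of the three coordinates in $(S,\cdot)$. So both sides of (\ref{Y-B}) send $(a,b,c)$ to triples whose product is $abc$. Once the first two coordinates agree, the third ones agree by cancellation in the group. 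Therefore $r_S$ is a solution if and only if the mixed identity of Step 1 holds for all $a,b,c$.

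\textbf{Step 3: make the mixed identity explicit.} Unfold it using the formulas (\ref{jibenjisuan}), namely $\lambda_x(y)=\mu(e)+\mu(x)+xy$ and $\rho_y(x)=\lambda_x(y)^{-1}xy$.

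On the left, the subscript $u=\rho_{\lambda_b(c)}(a)$ satisfies $u\,\rho_c(b)=\lambda_a(\lambda_b(c))^{-1}\,a\,\lambda_b(c)\,\rho_c(b)=\lambda_{ab}(c)^{-1}abc$. Hence the left side equals $\mu(e)+\mu(u)+\lambda_{ab}(c)^{-1}abc$.

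The right side is handled the same way, using $\lambda_{\rho_b(a)}(c)=\lambda_{\lambda_a(b)^{-1}ab}(c)=\lambda_a(b)^{-1}\lambda_{ab}\lambda_b^{-1}\ldots$. More precisely, I would rewrite it via the homomorphism property and (\ref{jibenjisuan1}) so that both sides are additive expressions in which $\lambda_{ab}(c)$ and $\lambda_a(b)$ appear as group elements.

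Both sides are of the form (group element)$^{-1}\cdot$(sum). Since (\ref{Y-B}) must hold for all triples, I would then reparametrize: multiply both sides on the left by the appropriate group element and use (\ref{near}) to distribute it. Then replace $a$ by a suitable product such as $a\mapsto a\mu(e)$, and rename $ab$ as $b$, which is legitimate because $(S,\cdot)$ is a group. The aim is to bring the identity to the form in the statement, with $a\mu(e)+b$ and $\mu(e)+\mu(b)+bc=\lambda_b(c)$ appearing. Since every substitution is a bijection of $S^3$, the resulting identity is equivalent to the mixed identity, and this gives both directions at once.

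\textbf{Main obstacle.} I expect the hardest part to be this last algebraic normalization. The terms $\mu(\cdot)$ evaluated at composite elements, such as $\mu(\mu(e)+\mu(b)+bc)$, have to be produced exactly, and stray summands must be absorbed. For the absorption I would rely on Lemma \ref{near1} (1) and (2), i.e. $x+\mu(e)+y=x+y$ and $x+\mu(y)+y\mu(e)+z=x+z$. The first thing I would try is to left-multiply both sides by $\lambda_{ab}(c)$ and expand by (\ref{near}); this turns the factor $\mu(u)$ into $\mu$ of a sum, matching the left-hand side of the stated condition.
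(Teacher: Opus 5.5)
\textbf{Steps 1 and 2.} These are correct, and Step 2 is slicker than the paper. Because $\lambda_x(y)\rho_y(x)=xy$, both sides of (\ref{Y-B}) send $(a,b,c)$ to triples with product $abc$, so (\ref{jie3}) follows from (\ref{jie1}) and (\ref{jie2}) by cancellation in $(S,\cdot)$. The paper instead computes both sides of (\ref{jie3}) separately and checks that they reduce to the same condition.

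\textbf{The gap is Step 3.} This computation is the entire content of the theorem, and you leave it as a plan. Moreover, the concrete move you propose does not work. Once you expand $\lambda_u(\rho_c(b))=\mu(e)+\mu(u)+u\rho_c(b)$ with $u=\rho_{\lambda_b(c)}(a)$, the summand $\mu(u)$ is stuck. The map $\mu$ is arbitrary, and (\ref{near}) only ever produces $\mu$ of the left multiplier. Left-multiplying by $\lambda_{ab}(c)$ gives $\lambda_{ab}(c)\mu(e)+\mu(\lambda_{ab}(c))+\lambda_{ab}(c)\mu(u)+\mu(\lambda_{ab}(c))+abc$. Here the term $\lambda_{ab}(c)\mu(u)$ survives and has no counterpart in the target condition. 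Also, $\mu(\lambda_{ab}(c))$ is not the $\mu(\lambda_b(c))=\mu(\mu(e)+\mu(b)+bc)$ that the condition contains. The right-hand side is not computed at all; the formula $\lambda_a(b)^{-1}\lambda_{ab}\lambda_b^{-1}\ldots$ is not meaningful. Finally, observing that both sides have the form (group element)$^{-1}\cdot$(sum) only helps if the two group elements coincide, and that is exactly what has to be shown.

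\textbf{The missing idea.} Do not expand $\lambda_u$ by (\ref{jibenjisuan}). Keep $\lambda_u(v)=u(u^{-1}\mu(e)+v)$ and insert $u=X^{-1}\lambda_b(c)$, where $X=a^{-1}\mu(e)+\lambda_b(c)$.
\begin{itemize}
\item \emph{Left side.} Distribute $\lambda_b(c)$ with (\ref{near}) and use $\lambda_b(c)\rho_c(b)=bc$. This gives $\lambda_u(\rho_c(b))=X^{-1}(X\mu(e)+\mu(\lambda_b(c))+bc)$; this is where $\mu$ of a sum comes from. Incidentally $\lambda_{ab}(c)=aX$, so your choice of multiplier was not the problem; the form you applied it to was.
\item \emph{Right side.} Put $p=\rho_b(a)$. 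Then (\ref{near}) and $\lambda_a(b)p=ab$ give $\lambda_a(b)^{-1}\mu(e)+\lambda_p(c)=p((ab)^{-1}\mu(e)+c)$. Hence $\rho_{\lambda_p(c)}(\lambda_a(b))=((ab)^{-1}\mu(e)+c)^{-1}(p^{-1}\mu(e)+c)$.
\item Replace the trailing $c$ in both factors by $\lambda_{b^{-1}}\lambda_b(c)$; this is allowed by (\ref{jibenjisuan1}) because it follows a summand. Expand with (\ref{jibenjisuan}) and pull $b^{-1}$ out with (\ref{near}), using $p^{-1}=b^{-1}(a^{-1}\mu(e)+b)$. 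The result is $X^{-1}((a^{-1}\mu(e)+b)\mu(e)+\mu(b)+bc)$, with the same prefactor $X^{-1}$.
\end{itemize}
Cancel $X^{-1}$, and rewrite $X\mu(e)=(a^{-1}\mu(e)+\mu(b)+bc)\mu(e)$ via Lemma \ref{near1} (1). Substituting $a^{-1}\mapsto a$ then gives exactly the stated condition. No renaming of $ab$ as $b$, and no substitution $a\mapsto a\mu(e)$, is needed.
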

\begin{proof}
It is a routine matter to prove  that ${r}_S$ is  a   solution  of the Yang-Baxter equation if and only if
\begin{equation}\label{jie1}
\lambdaa{a}{\lambdaa{b}{\left(c\right)}} = \lambdaa{\lambdaa{a}{\left(b\right)}}{\lambdaa{\rhoo{b}{\left(a\right)}}{\left(c\right)}},
\end{equation}
\begin{equation}\label{jie2}
\lambdaa{\rhoo{\lambdaa{b}{\left(c\right)}}{\left(a\right)}}{\rhoo{c}{\left(b\right)}} =\rhoo{\lambdaa{\rhoo{b}{\left(a\right)}}{\left(c\right)}}{\lambdaa{a}{\left(b\right)}},
\end{equation}
\begin{equation}\label{jie3}
\rhoo{\rhoo{b}{\left(c\right)}}{\rhoo{\lambdaa{c}{\left(b\right)}}{\left(a\right)}} = \rhoo{c}{\rhoo{b}{\left(a\right)}}
\end{equation}
for all $a,b,c\in S$.  Let $a,b,c\in S$. Then by Lemma \ref{tongtai}, $$\lambdaa{\lambdaa{a}{\left(b\right)}}{\lambdaa{\rhoo{b}{\left(a\right)}}{\left(c\right)}}
                =\lambdaa{\left(\lambdaa{a}{\left(b\right)}\right)\left(\rhoo{b}{\left(a\right)}\right)}{\left(c\right)} = \lambdaa{a b}{\left(c\right)} = \lambdaa{a}{\lambdaa{b}{\left(c\right)}},$$
This shows  (\ref{jie1}) always holds.  Observe that $\lambda_b(c)\rho_c(b)=bc$ by  Lemma \ref{tongtai}. So
\begin{eqnarray*}
 &\lambdaa{\rhoo{\lambdaa{b}{\left(c\right)}}{\left(a\right)}}{\rhoo{c}{\left(b\right)}} =
  \rhoo{\lambdaa{b}{\left(c\right)}}{\left(a\right)}  \left(\left(\rhoo{\lambdaa{b}{\left(c\right)}}{\left(a\right)}\right)^{-1} \mu(e) +\rhoo{c}{\left(b\right)}\right)\\
 &=\left(a^{-1}\mu(e)+\lambdaa{b}{\left(c\right)}\right)^{-1}\cdot  \lambdaa{b}{\left(c\right)} \cdot \left(\left(\lambdaa{b}{\left(c\right)}\right)^{-1} \left(a^{-1}\mu(e)+\lambdaa{b}{\left(c\right)}\right)\mu(e)+\rhoo{c}{\left(b\right)}\right)\\
 &=\left(a^{-1}\mu(e)+\lambdaa{b}{\left(c\right)}\right)^{-1} \cdot \left( (a^{-1}\mu(e)+\lambda_b(c))\mu(e)+\mu(\lambda_b(c))+bc\right)\\
 &=\left(a^{-1}\mu(e)+\lambdaa{b}{\left(c\right)}\right)^{-1} \cdot \left( (a^{-1}\mu(e)+\mu(b)+bc)\mu(e)+\mu(\mu(e)+\mu(b)+bc)+bc\right)
\end{eqnarray*}
by (\ref{near}) and (\ref{jibenjisuan}). On the other hand, by Lemma \ref{tongtai} we have  $\lambda_a(b)\rho_b(a)=ab$ and
\begin{eqnarray*}
&\rhoo{\lambdaa{\rhoo{b}{\left(a\right)}}{\left(c\right)}}{\lambdaa{a}{\left(b\right)}}
=\left(\left(\lambdaa{a}{\left(b\right)}\right)^{-1}\mu(e) + \lambdaa{\rhoo{b}{\left(a\right)}}{\left(c\right)}\right)^{-1}  \lambdaa{\rhoo{b}{\left(a\right)}}{\left(c\right)}\\
&=\left(\left(\lambdaa{a}{\left(b\right)}\right)^{-1}\mu(e) +\mu(\rho_b(a))+\rho_b(a) c
 \right)^{-1} \cdot \rhoo{b}{\left(a\right)} \left(\left(\rhoo{b}{\left(a\right)}\right)^{-1}\mu(e)+c\right)\,\,\,  (\mbox{by (\ref{jibenjisuan})})\\
 &=\left(\rho_b(a) (\rho_b(a))^{-1}\left(\lambdaa{a}{\left(b\right)}\right)^{-1}\mu(e)  +\mu(\rho_b(a))+\rho_b(a) c
 \right)^{-1}\cdot \rhoo{b}{\left(a\right)} \left(\left(\rhoo{b}{\left(a\right)}\right)^{-1}\mu(e)+c\right)\\
 &=\left(\rho_b(a) [(\rho_b(a))^{-1}\left(\lambdaa{a}{\left(b\right)}\right)^{-1}\mu(e)  +  c]
  \right)^{-1} \cdot \rhoo{b}{\left(a\right)} \left(\left(\rhoo{b}{\left(a\right)}\right)^{-1}\mu(e)+c\right)\,\,\,  (\mbox{by (\ref{near})})\\
 &=\left((\lambdaa{a}{\left(b\right)} \rho_b(a))^{-1}\mu(e)  +  c
  \right)^{-1}  (\rho_b(a))^{-1} \rhoo{b}{\left(a\right)} \left(\left(\rhoo{b}{\left(a\right)}\right)^{-1}\mu(e)+c\right)\\
 &=\left([(ab)^{-1}\mu(e)  +  c]
   \right)^{-1}  \left(\left(\rhoo{b}{\left(a\right)}\right)^{-1}\mu(e)+c\right)\\
&=\left(b^{-1}a^{-1}\mu(e)  + \lambda_{b^{-1}}\lambda_b(c)
    \right)^{-1}  \left(\left(\rhoo{b}{\left(a\right)}\right)^{-1}\mu(e)+\lambda_{b^{-1}}\lambda_b(c) \right)\,\,\,  (\mbox{by (\ref{jibenjisuan1})})\\
&=\left(b^{-1}a^{-1}\mu(e)   +\mu(b^{-1})+b^{-1}\lambda_b(c)
    \right)^{-1}  \left(b^{-1}(a^{-1}\mu(e)+b) \mu(e) +\mu(b^{-1})+b^{-1}\lambda_b(c) \right)\,\,\,  (\mbox{by (\ref{jibenjisuan})})\\
&=\left(b^{-1}(a^{-1}\mu(e) +  \lambda_b(c))
    \right)^{-1}  \cdot b^{-1}((a^{-1}\mu(e)+b) \mu(e) +\lambda_b(c))\,\,\,  (\mbox{by (\ref{near})})\\
&=(a^{-1}\mu(e) +  \lambda_b(c))^{-1}  bb^{-1}((a^{-1}\mu(e)+b) \mu(e) +\lambda_b(c))\\
&=(a^{-1}\mu(e) +  \lambda_b(c))^{-1}   ((a^{-1}\mu(e)+b) \mu(e) +\lambda_b(c))\\
&=(a^{-1}\mu(e) +  \lambda_b(c))^{-1}   ((a^{-1}\mu(e)+b) \mu(e) + \mu(b)+bc).\,\,\,  (\mbox{by (\ref{jibenjisuan})})
\end{eqnarray*}
 Finally, we obtain that
\begin{eqnarray*}
&\rhoo{\rhoo{c}{\left(b\right)}}{\rhoo{\lambdaa{b}{\left(c\right)}}{\left(a\right)}}
 =\left(\left(\rhoo{\lambdaa{b}{\left(c\right)}}{\left(a\right)}\right)^{-1}\mu(e)+\rhoo{c}{\left(b\right)}\right)^{-1} \rhoo{c}{\left(b\right)}\\
&=\left(\left(\lambdaa{b}{\left(c\right)}\right)^{-1}  \left(a^{-1}\mu(e)+\lambdaa{b}{\left(c\right)}\right)\mu(e)+\rhoo{c}{\left(b\right)}\right)^{-1} \rhoo{c}{\left(b\right)}\\
&=\left(\left(\lambdaa{b}{\left(c\right)}\right)^{-1}  \left(a^{-1}\mu(e)+\lambdaa{b}{\left(c\right)}\right)\mu(e)+\lambdaa{\left(\lambdaa{b}{\left(c\right)}\right)^{-1}}{\lambdaa{\lambdaa{b}{\left(c\right)}}{\rhoo{c}{\left(b\right)}}}\right)^{-1} \rhoo{c}{\left(b\right)} \,\,\,  (\mbox{by (\ref{jibenjisuan1})})\\
&=\left(\left(\lambdaa{b}{\left(c\right)}\right)^{-1}  \left(a^{-1}\mu(e)+\lambdaa{b}{\left(c\right)}\right)\mu(e)+\mu((\lambda_b(c))^{-1})+(\lambda_b(c))^{-1}
{\lambdaa{\lambdaa{b}{\left(c\right)}}{\rhoo{c}{\left(b\right)}}}\right)^{-1} \rhoo{c}{\left(b\right)} \,\,\,  (\mbox{by (\ref{jibenjisuan})})\\
&=\left(\left(\lambdaa{b}{\left(c\right)}\right)^{-1} ( \left(a^{-1}\mu(e)+\lambdaa{b}{\left(c\right)}\right)\mu(e)+  \lambda_{\lambda_b(c)} \rho_c(b)   )\right)^{-1} \rhoo{c}{\left(b\right)} \,\,\,  (\mbox{by (\ref{near})})\\
&=\left(\left(\lambdaa{b}{\left(c\right)}\right)^{-1} ( \left(a^{-1}\mu(e)+\lambdaa{b}{\left(c\right)}\right)\mu(e)+\mu(\lambda_b(c))+\lambda_b(c) \rho_c(b))\right)^{-1} \rhoo{c}{\left(b\right)} \,\,\,  (\mbox{by (\ref{jibenjisuan})})\\
&=\left( ( \left(a^{-1}\mu(e)+\lambdaa{b}{\left(c\right)}\right)\mu(e)+ \mu(\lambda_b(c))+bc)\right)^{-1} \lambdaa{b}{\left(c\right)}  \rhoo{c}{\left(b\right)}  \,\,\,  (\mbox{by (Lemma \ref{tongtai}})\\
&=\left( ( \left(a^{-1}\mu(e)+\lambdaa{b}{\left(c\right)}\right)\mu(e)+ \mu(\lambda_b(c))+bc)\right)^{-1} bc\,\,\,  (\mbox{by (Lemma \ref{tongtai}})\\
&=\left( ( \left(a^{-1}\mu(e)+\mu(b)+bc\right)\mu(e)+ \mu(\mu(e)+\mu(b)+bc)+bc)\right)^{-1} bc\,\,\,  (\mbox{by (Lemma \ref{tongtai}})
\end{eqnarray*}
and
\begin{eqnarray*}
\rhoo{c}{\rhoo{b}{\left(a\right)}}
&=\left(\left(\rhoo{b}{\left(a\right)}\right)^{-1}\mu(e)+c\right)^{-1}  c =\left(b^{-1}  \left(a^{-1}\mu(e)+b\right)\mu(e)+c\right)^{-1}  c\\
&=\left(b^{-1}  \left(a^{-1}\mu(e)+b\right)\mu(e)+\lambdaa{b^{-1}}{\lambdaa{b}{\left(c\right)}}\right)^{-1}c \,\,\,  (\mbox{by (\ref{jibenjisuan1})})\\
&=\left(b^{-1}  \left(a^{-1}\mu(e)+b\right)\mu(e)+   \mu(b^{-1})+b^{-1}\lambda_b(c))\right)^{-1}c \,\,\,  (\mbox{by (\ref{jibenjisuan})})\\
&=\left(b^{-1}  (\left(a^{-1}\mu(e)+b\right)\mu(e)+ \lambda_b(c))\right)^{-1}c \,\,\,  (\mbox{by (\ref{near})})\\
&=\left(b^{-1}  (\left(a^{-1}\mu(e)+b\right)\mu(e)+ \mu(b)+bc)\right)^{-1}c \,\,\,  (\mbox{by (\ref{jibenjisuan})})\\
&=(\left(a^{-1}\mu(e)+b\right)\mu(e)+ \mu(b)+bc)^{-1}b c.
\end{eqnarray*}
Observe that $\left(S, \cdot\right)$ is a group, it follows that the result holds.
\end{proof}

\begin{coro}\label{brace}
 Let $(S,+,\cdot,\mu)$ be a near left  brace such that $0$ is the identity of the group $(S,+)$. The map ${r}_S:S\times S \times \to S\times S$, defined by ${r}_S(a,b)=(\lambda_a(b), \rho_b(a))$ for all $a,b \in S$, is a solution if and only if
\begin{equation}\label{tiaojiane}
(a\cdot 0 -b\cdot 0 +b\cdot c)\cdot 0 -(-b\cdot 0+b\cdot c)\cdot 0=(a\cdot 0+b)\cdot 0 -b\cdot 0
\end{equation}
        for all $a,b,c\in S$.
\end{coro}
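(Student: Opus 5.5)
The plan is to specialize Theorem \ref{main3} to the case where $(S,+)$ is a group, using Remark \ref{zhun}. First we regard the near left brace $(S,+,\cdot)$ as a near left semibrace $(S,+,\cdot,\mu)$ with $\mu(x)=-x\cdot 0$. The axiom $x(y+z)=xy-x\cdot 0+xz$ is exactly (\ref{near}) for this $\mu$. As computed in Remark \ref{zhun}, $\mu(e)=-e\cdot 0=-0=0$, so $\mu(e)$ is the additive identity. Under this identification the maps $\lambda_a,\rho_a$ of the present section become $\lambda_a(b)=a(a^{-1}\cdot 0+b)$ and $\rho_a(b)=(b^{-1}\cdot 0+a)^{-1}a$, which are the maps in the corollary. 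Theorem \ref{main3} therefore says that $r_S$ is a solution if and only if
$$(a\cdot 0+\mu(b)+bc)\cdot 0+\mu(\mu(e)+\mu(b)+bc)+bc=(a\cdot 0+b)\cdot 0+\mu(b)+bc$$
holds for all $a,b,c\in S$.

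Next we substitute $\mu(b)=-b\cdot 0$ and $\mu(e)=0$ into this identity. The term $\mu(\mu(e)+\mu(b)+bc)$ becomes $-(-b\cdot 0+b\cdot c)\cdot 0$. The condition then reads
$$(a\cdot 0-b\cdot 0+b\cdot c)\cdot 0-(-b\cdot 0+b\cdot c)\cdot 0+b\cdot c=(a\cdot 0+b)\cdot 0-b\cdot 0+b\cdot c.$$
Since $(S,+)$ is a group, we cancel $b\cdot c$ on the right of both sides, which gives exactly (\ref{tiaojiane}). Conversely, adding $b\cdot c$ on the right of (\ref{tiaojiane}) recovers the condition of Theorem \ref{main3}, so the two conditions are equivalent and the corollary follows.

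There is no real obstacle here. The only points needing care are that $\mu(e)=0$, so the $\mu(e)$ summands vanish, and that right cancellation is legitimate because $(S,+)$ is a group, which need not be abelian. The order of summands must therefore be preserved throughout the substitution.
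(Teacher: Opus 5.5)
Your proposal is correct and follows the paper's argument. Like the paper, you view the near left brace as a near left semibrace with $\mu(x)=-x\cdot 0$ and $\mu(e)=0$ (Remark~\ref{zhun}) and then specialize the condition of Theorem~\ref{main3}; you also make explicit the substitution and the right cancellation of $b\cdot c$ in the group $(S,+)$, which the paper leaves to the reader.
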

\begin{proof}
By Remark \ref{zhun}, we have $\mu(x)=-x\cdot 0$ and $\mu(e)=0$. Therefore, the results follows from Theorem \ref{main3}.
\end{proof}
\begin{remark}Let $(S,+,\cdot,\mu)$ be a near left semibrace. We observe that $\overline{r}_S$ defined in Theorem \ref{main2} and $r_S$ defined in Theorem \ref{main3}  may  be different  solutions. For example, consider $S=\{1,0\}$ with the following addition $+$ and  multiplication $\cdot$:
$$1+0=1=0+1, 0+0=0=1+1;\,\,\, 1\cdot 1=1=0\cdot 0=1, 1\cdot 0=0\cdot 1=0.$$ Moreover, define a map $\mu: S\to S, 0\mapsto 1, 1\mapsto 0$.
Then one can check that $(S,+,\cdot,\mu)$ is a near left semibrace. In fact, $(S,+,\cdot,\mu)$ is a near left brace and $1$ and $0$ are the identities in the groups $(S,+)$ and $(S,\cdot)$, respectively. A routine calculations tell us that
(\ref{tiaojiane}) holds. Thus  both $\overline{r}_S$ and $r_S$ are solutions. However, $$\overline{r}_S(1,1)=(0,-)\not=(1,-)=r_S(1,1).$$
The relationship between  $\overline{r}_S$ and $r_S$ is a topic that deserves further investigation.
\end{remark}
\begin{remark}
Let $(S,+,\cdot,\mu)$ be a near left  semibrace such that $\mu(e)=e$. This together with Lemma \ref{near1} (1) implies that  the condition given in Theorem \ref{main3} becomes
$$a+e+\mu(b)+bc+\mu(e+\mu(b)+bc)\mu(e)+bc=a+b+\mu(b)+bc.$$
By Lemma \ref{near1} (2), both sides of the above equality are $a+bc$. Thus in this case, the condition given in Theorem \ref{main3} always true, and so  ${r}_S$ defined in Theorem \ref{main3} is always a  solution. In fact, by the fact that $\mu(e)=e$ and  Lemma \ref{near1} (1), we have $x+e+y=x+y$ for all $x,y\in S$. This implies that $(S,+,\cdot)$ is a strong left semibrace. By Theorem \ref{solution}, $\overline{r}_S$ given in Theorem \ref{solution} is a solution. However, in this case,  ${r}_S=\overline{r}_S$ obviously. So we also obtain that ${r}_S$ is always a  solution by   this method.
\end{remark}

\vspace{0.1cm}

\noindent {\bf Acknowledgment:}
The author thanks Professors Marzia Mazzotta, Paola Stefanelli, and Francesco Catino at Universit$\grave{\rm a}$ del Salento  for   pointing out  Proposition 5 in their paper \cite{Catino-Mazzotta-Stefanelli}. By this result, we can simplify the proof of Theorem \ref{main1} greatly.  The paper is supported partially  by the Nature Science Foundations of  China (12271442, 11661082).

\end{document}